\newcommand{\Z}{{\textsf{\textup{Z}}}}
\newtheorem{thm}{Theorem}
\newtheorem{cor}[thm]{Corollary}
\newtheorem{defi}[thm]{Definition}
\newtheorem{rem}[thm]{Remark}
\newtheorem{nota}[thm]{Notation}
\newtheorem{princ}[thm]{Principle}
\newtheorem{ack}[thm]{Acknowledgement}
\newtheorem*{tempo*}{Template}
\newcommand\be{\begin{equation}}
\newcommand\ee{\end{equation}} 
\def\bdefi{\begin{defi}\rm}
\def\edefi{\end{defi}}
\def\bnota{\begin{nota}\rm}
\def\enota{\end{nota}}
\def\FIVE{\Pi_{1}^{1}\text{-\textup{\textsf{CA}}}_{0}}
\def\SIX{\Pi_{2}^{1}\text{-\textsf{\textup{CA}}}_{0}}
\def\SIXk{\Pi_{k}^{1}\text{-\textsf{\textup{CA}}}_{0}}
\def\SIXko{\Pi_{k+1}^{1}\text{-\textsf{\textup{CA}}}_{0}}
\def\SIXK{\Pi_{k}^{1}\text{-\textsf{\textup{CA}}}_{0}^{\omega}}
\def\ZFC{\textup{\textsf{ZFC}}}
 \def\r{\mathbb{r}}
\def\RCA{\textup{\textsf{RCA}}}
\def\({\textup{(}}
\def\){\textup{)}}
\def\RCAo{\textup{\textsf{RCA}}_{0}^{\omega}}
\def\ACAo{\textup{\textsf{ACA}}_{0}^{\omega}}
\def\WKL{\textup{\textsf{WKL}}}
\def\bye{\end{document}}
\def\N{{\mathbb  N}}
\def\Q{{\mathbb  Q}}
\def\R{{\mathbb  R}}
\def\SS{\textup{\textsf{S}}}
\def\di{\rightarrow}
\def\asa{\leftrightarrow}
\def\ACA{\textup{\textsf{ACA}}}
\def\BOOT{\textup{\textsf{BOOT}}}
\def\eps{\varepsilon}
\def\FF{\textup{\textsf{FF}}}
\def\INT{\textup{\textsf{int}}}
\numberwithin{equation}{section}
\numberwithin{thm}{section}
\begin{document}
\title{Sometimes tame, sometimes wild: weak continuity}
\author{Sam Sanders}
\address{Department of Philosophy II, RUB Bochum, Germany}
\email{sasander@me.com}
\keywords{Reverse Mathematics, second- and higher-order arithmetic, weak continuity, decompositions of continuity}
\subjclass[2020]{Primary: 03B30, 03F35. Secondary: 54C08.}
\begin{abstract}
Continuity is one of the most central notions in mathematics, physics, and computer science.  
An interesting associated topic is \emph{decompositions of continuity}, where continuity is shown to be equivalent to the combination of two or more \emph{weak continuity} notions.  
In this paper, we study the logical properties of basic theorems about weakly continuous functions, like the supremum principle for the unit interval.  
We establish that most weak continuity notions are as tame as continuity, i.e.\ the supremum principle can be proved from the relatively weak \emph{arithmetical comprehension axiom} only.  
By contrast, for seven `wild' weak continuity notions, the associated supremum principle yields rather strong axioms, including Feferman's projection principle, full second-order arithmetic, or Kleene's associated quantifier $(\exists^{3})$.  Working in Kohlenbach's higher-order \emph{Reverse Mathematics}, we also obtain elegant equivalences in various cases and obtain similar results for e.g.\ Riemann integration.  
We believe these results to be of interest to mainstream mathematics as they cast new light on the distinction of `ordinary mathematics' versus `foundations of mathematics/set theory'.
\end{abstract}

\maketitle              
%


\section{Introduction}\label{gintro}
\subsection{Aim}\label{klintro}
It is a commonplace that continuity is one of the most central properties in mathematics, crucial as it is to analysis, topology, (theoretical) computer science, and physics.  
Now, continuity is \emph{very} well-studied in mathematical logic where it is classified as fairly \emph{tame}, as basic properties like the supremum principle on the unit interval, can be settled in rather \emph{weak} logical systems.  Now, there are dozens (perhaps hundreds?) of \textbf{decompositions of continuity}, where continuity is shown to be equivalent to the combination of two or more {weak continuity\footnote{We note that \emph{weak and generalised continuity} come with its own AMS code, namely 54C08, i.e.\ weak continuity is not a fringe topic in mathematics.} notions}, going back to Baire (\cite{beren2}, 1899), as follows:  
\begin{center}
\emph{continuity $\asa$ \big[weak continuity notion \textsf{\textup{A}} \textbf{\textup{and}} weak continuity notion \textsf{\textup{B}}}\big].  \textsf{(Deco)}
\end{center}
It is then a natural question whether these weak continuity notions are as \emph{tame} as continuity, say working over the real numbers.  
In this paper, we establish that most weak continuity notions are indeed as \emph{tame} as continuity, while seven `exceptional' ones are completely \emph{wild} in comparison.
Nonetheless, these tame and wild weak continuity notions exist side-by-side in the mainstream literature.  

\smallskip

To make the above italicised heuristic terms (tame, wild, hard) precise, we work in \emph{Reverse Mathematics}, a program where the aim is to find 
the minimal axioms needed to prove a given theorem of ordinary mathematics.  In particular, we show that for many weak continuity notions, basic properties like the supremum principle can be 
established in the kind of logical systems one uses to study continuity itself, namely \emph{arithmetical comprehension}.  What is surprising is that for other weak continuity notions, existing side-by-side in the literature, the associated basic properties, liking finding the supremum or evaluating the Riemann integral, are `wild' in that they  exist at the outer edges of Reverse Mathematics, implying as they do full second-order arithmetic in various incarnations.  In this paper, we study the supremum principle based on the following definitions of the supremum of a certain class $\Gamma$ of weakly continuous functions $f:[0,1]\di \R$ .
\begin{itemize}
\item[\textsf{(sup1)}] The supremum $\sup_{x\in [p, q]}f(x)$ exists as a sequence with variables over $p, q\in \Q\cap [0,1]$ for any given $f$ in $\Gamma$.
\item[\textsf{(sup2)}] The supremum $\lambda x.\sup_{y\in [0, 1]}f(x, y)$ exists as an $\R \di \R$-function with variable $x$ over the real numbers, for any given $f$ in $\Gamma$.  
\item[\textsf{(sup3)}] The supremum $\lambda f.\sup_{x\in [0, 1]}f(x)$ exists as an $(\R\di \R)\di \R$-functional with variable $f:[0,1]\di \R$ over the class $\Gamma$ of weakly continuous functions.
\end{itemize}
Each of these can be found in the literature (\cite{taoana2, taocompa, rudin, rudinrc, stein1}).  In particular, the supremum norm, e.g.\ on the Lebesgue measurable functions, denoted 
\be\label{normy}
\|f\|_{\infty}:= \sup_{x\in [0,1]}f(x),
\ee
is essentially $\lambda f.\sup_{x\in [0,1]}f(x)$.  
Moreover, \emph{maximal functions} from harmonic analysis have the form $\sup_{y\in M}f(x,y)$ (see e.g.\ \cite{stein1}*{p.\ 92} or \cite{stein4}*{p.\ 198 and 208}) and occur in the Hardy-Littlewood theorem (\cite{stein2}*{p.\ 48} and \cite{stein3}*{p.~246}).  On a historical note, Darboux (\cite{darb}*{p.\ 61, Theorem 1}, 1875) already establishes the existence of the supremum for \emph{arbitrary} bounded functions on bounded intervals.  We shall therefore use \emph{Darboux's supremum principle} to refer to the formulation using \textsf{(sup1)}.

\smallskip

We expand on the above in Section \ref{intro}, including a detailed overview of this paper.  Basic definitions can be found in Section~\ref{seca} while our main results are in Section~\ref{main}.
In particular, the so-called tame (resp.\ wild) weak continuity notions are analysed in Section \ref{klef2} (resp.\ Section \ref{klef} and \ref{lef}).

\smallskip

Finally, we believe our results to be of general interest to the mathematical community as they cast new light upon the distinction `ordinary mathematics' versus `set theory'.  
In particular, the received view based on the latter juxtaposition seems to be in serious need of adjustment, as discussed in detail in Section \ref{funda}.  

\subsection{Motivation and overview}\label{intro}
We provide an overview of and motivation for this paper, including \emph{Reverse Mathematics}, abbreviated RM 
and where Stillwell's excellent \cite{stillebron} furnishes an introduction to RM for the mathematician-in-the-street. 

\smallskip

First of all, RM is a program in the foundations of mathematics, founded by Friedman (\cites{fried, fried2}) and developed extensively by Simpson (\cites{simpson1, simpson2}), where the aim is to find the minimal axioms that prove a given theorem of ordinary mathematics.  
Once the minimal axioms are known, the theorem at hand is generally always \emph{equivalent} to the axioms over a weak logical system called the \emph{base theory}, as already observed by Friedman in the early days of RM as follows.
\begin{quote}
When the theorem is proved from the right axioms, the axioms can be proved from the theorem (\cite{fried}*{p.\ 235}).
\end{quote}
We assume basic familiarity with RM, including the \emph{higher-order} approach in \cite{kohlenbach2}, which is \emph{intimately} related to the Friedman-Simpson framework, as shown in \cite{dagsamXIV}.
In his pioneering paper \cite{kohlenbach2}, Kohlenbach studies `uniform' theorems and their equivalences; for instance, the following are equivalent over the base theory by \cite{kohlenbach2}*{\S3}.
\begin{itemize}
\item Kleene's $(\exists^{2}):$ $(\exists E:\N^{\N}\di \N)(\forall f\in \N^{\N})(E(f)=0\asa (\exists n\in \N)(f(n)=0)$.
\item \emph{There exists a discontinuous $\R\di \R$-function.}
\item \textsf{UWKL:} \emph{there exists a functional $\Phi:\N^{\N}\di \N^{\N}$ such that for $T$ an infinite binary tree, $\Phi(T)$ is a path through $T$.}
\item \emph{there exists $\Psi:(\R\di \R)\di \R$ such that for continuous $f:[0,1]\di \R$, $f$ attains its maximum at $\Psi(f)$, i.e.\ $(\forall x\in [0,1])(f(x)\leq f(\Psi(f)))$.}
\end{itemize}
The informed reader will recognise the final two items as the `uniform' versions of weak K\"onig's lemma and Weierstrass' maximum theorem, well-known from RM (see e.g.\ \cite{simpson2}*{IV.1-2}).
Kleene's quantifier $(\exists^{2})$ is the higher-order version of $\ACA_{0}$ and similar equivalences may be found in \cite{yamayamaharehare, jeffcarl, dagsamXIV}, including for the \emph{Suslin functional}, i.e.\ the higher-order version of $\FIVE$.  Kohlenbach also studies the following uniform version of the supremum principle in \cite{kohlenbach2}*{\S3}, which is our starting point:
\begin{center}
 \emph{there exists $\Xi:(\R\di \R)\di \R$ such that for continuous $f:[0,1]\di \R$, $\Xi(f)$ is the supremum of $f$ on $[0,1]$.}
\end{center}
Clearly, the previous supremum principle makes use of \textsf{(sup3)} from Section \ref{klintro} for continuous functions.  It is then a natural question what happens if we generalise `continuity' to `weak continuity', with the latter stemming from the literature.

\smallskip

Secondly, the above axioms, including $(\exists^{2})$, are rather weak compared to the `upper limit' of Friedman-Simpson RM, i.e.\ second-order arithmetic $\Z_{2}$.  To reach the latter, we establish equivalences between the following in Section~\ref{klef}:  
\begin{enumerate}
\renewcommand{\theenumi}{\alph{enumi}}
\item Kleene's $(\exists^{3}):$ $(\exists E)(\forall Y: \N^{\N}\di \N)(E(Y)=0\asa (\exists f\in \N^{\N})(Y(f)=0)$,\label{fenrier}
\item  \emph{there exists $\Xi:(\R\di \R)\di \R$ such that for $f:[0,1]\di \R$ in $\Gamma$, $\Xi(f)$ is the supremum of $f$ on $[0,1]$.}\label{gop}
\end{enumerate}
where $(\exists^{3})$ readily implies $\Z_{2}$ and where $\Gamma$ consists of certain \emph{weak continuity} notions from the literature, some going back more than a hundred years, namely having been studied in Young (\cite{jonnypalermo}) and Blumberg (\cite{bloemeken}).  We shall use the adjective `wild' to refer to weak continuity notions for which item (b) is equivalent to $(\exists^{3})$.

\smallskip

\emph{By contrast}, for other related weak continuity notions, the existence of the supremum functional as in item (b) can be proved using \emph{nothing more than} $(\exists^{2})$ and weaker axioms (Section \ref{klef2}).  We shall use the adjective `tame' to refer weak continuity notions for which item (b) follows from $(\exists^{2})$.  
As it happens, all (wild and tame) weak continuity notions are all taken from \emph{decompositions of continuity}, which take the form \textsf{(Deco)} from Section \ref{klintro}
for certain spaces and where the weak continuity notions are generally independent; we shall provide ample references to the relevant instances of \textsf{(Deco)} in the literature.
Moreover, the supremum principle is not special: we also establish equivalences involving $(\exists^{3})$ and other functionals, e.g.\ based on Riemann integration, the intermediate value property, and continuity (almost) everywhere.  We expect many more examples to exist.

\smallskip

Next, the above equivalences involve \emph{fourth-order} objects, like the supremum functional $\Xi$ in item \eqref{gop} and the comprehension functional $E$ from $(\exists^{3})$ in item~\eqref{fenrier}.  
In Section~\ref{lef1}, we study the \emph{third-order} supremum principle based on~\textsf{(sup1)} from Section~\ref{klintro}.  In particular, we obtain equivalences between Darboux's supremum principle for certain wild continuity notions and the axiom $\BOOT$.  The latter is essentially Feferman's \emph{projection principle} \textsf{Proj1} from \cite{littlefef}, a highly impredicative and explosive\footnote{The axiom $\BOOT$ is \emph{explosive} in that combining it with $\SIXK$ from Section \ref{lef1}, one obtains $\Pi_{k+1}^{1}$-comprehension by Theorem \ref{timo}.} principle.   

Finally, \textsf{(sup1)} and \textsf{(sup3)} from Section \ref{klintro} deal with the unit interval $[0,1]$.  We study the supremum principle based on \textsf{(sup2)} in Section \ref{beyo}.  In particular, we show that that the third-order supremum principle for wild weakly continuous $[0,1]^{2}\di \R$-functions, implies $\Pi_{n}^{1}\text{-\textsf{\textup{CA}}}_{0}$ for any $n>0$.
We establish the same for generalisations of \textsf{Proj1} to $n$-quantifier alternations.

\smallskip

In conclusion, we believe our equivalences for $(\exists^{3})$ to be surprising and unique: on one hand, while Hunter establishes equivalences for $(\exists^{3})$ in topology (\cite{hunterphd}), 
our supremum functional for (historical) wild weak continuity notions seems \emph{much} more basic.  
On the other hand, tame weak continuity notions have a supremum functional that can be handled by $(\exists^{2})$; the difference between the latter and $(\exists^{3})$ amounts to the difference between arithmetical comprehension $\ACA_{0}$ and full second-order comprehension $\Z_{2}$, which is \emph{unheard of} in RM, to the best of our knowledge.  Moreover, the supremum principle based on \textsf{(sup2)} is \emph{third-order} and yields $\Pi_{n}^{1}\text{-\textsf{\textup{CA}}}_{0}$ for any $n>0$, while the operator $\lambda x.\sup_{y\in [0,1]}f(x, y)$ may be found in graduate textbooks and throughout the literature.  We discuss the foundational implications of our results in some detail in Section \ref{funda}. 

\subsection{Basic definitions}\label{seca}
We introduce some definitions needed in the below, mostly stemming from mainstream mathematics.
We note that subsets of $\R$ are given by their characteristic functions (Definition \ref{char}), where the latter are common in measure and probability theory.

\smallskip
\noindent
First of all, we make use the usual definition of (open) set, where $B(x, r)$ is the open ball with radius $r>0$ centred at $x\in \R$.
\bdefi[Set]\label{char}~
\begin{itemize}
\item Subsets $A$ of $ \R$ are given by their characteristic function $F_{A}:\R\di \{0,1\}$, i.e.\ we write $x\in A$ for $ F_{A}(x)=1$ for all $x\in \R$.
\item We write `$A\subset B$' if we have $F_{A}(x)\leq F_{B}(x)$ for all $x\in \R$. 
\item A set $O\subset \R$ is \emph{open} if $x\in O$ implies that there is $k\in \N$ with $B(x, \frac{1}{2^{k}})\subset O$.
\item A set $C\subset \R$ is \emph{closed} if the complement $\R\setminus C$ is open. 
\item For $S\subset \R$, $\INT(S)$, $\partial S$, and $\overline{S}$ are the \emph{interior}, \emph{boundary}, and \emph{closure} of $S$. 
\end{itemize}
\edefi
\noindent
No computational data/additional representation is assumed for open/closed sets.  
As established in \cites{dagsamXII, dagsamXIII, dagsamXIV, samBIG, samBIG2}, one readily comes across closed sets in basic real analysis (Fourier series) that come with no additional representation. 

\smallskip

Secondly, we list an important remark about a certain notational practice of RM, also connected to Definition \ref{char}.
\begin{rem}[Virtual existence]\label{hio}\rm
It is common in RM to make use of a kind of `virtual' or `relative' existence (see e.g.\ \cite{simpson2}*{X.1}).  
As an example, $\ACA_{0}$ is equivalent to the existence of the supremum $\sup C$ of any (coded) closed $C\subset [0,1]$ by \cite{simpson2}*{IV.2.11}.   
Thus, in the weaker system $\RCA_{0}$, $\sup C$ may not exist for all closed $C\subset [0,1]$.  Nonetheless, the formula `$\sup C<1$' \emph{is} meaningful 
in $\RCA_{0}$, namely as shorthand for $(\exists k\in \N)(\forall x\in C)(x\leq 1-\frac{1}{2^{k}})$. 

\smallskip

Similarly, we note that while the interior, boundary, or closure of a set may not exist in weak systems like $\RCAo$, the formulas `$x\in \INT (S)$', `$x\in \partial S$', and `$x\in \overline{S}$'
make perfect sense as shorthand for the associated standard definitions in the language of all finite types.   In fact, the `closure' notation $\overline{S}$ is already in use in second-order RM for the notion of \emph{separably closed set}, which is the closure of a countable set of reals $S$, as can be gleaned from \cite{browner}*{p.\ 51}.
\end{rem}

\subsection{Foundational musings}\label{funda}
We discuss the foundational implications of our results, especially as they pertain to the distinction of `ordinary mathematics' versus `set theory'.  
In a nutshell, the results in this paper show that the received view based on the latter juxtaposition is in dire need of adjustment.  

\smallskip

First of all, the official foundations of (almost all of) mathematics are provided by \emph{Zermelo-Fraenkel set theory with the Axiom of Choice}, abbreviated $\ZFC$. 
Now, Hilbert was a strong proponent of set theory, but he also realised that large parts of mathematics can be developed in logical systems \emph{far} weaker than $\ZFC$.   
Together with Bernays, Hilbert undertook such a development in the \emph{Grundlagen} volumes (\cites{hillebilly, hillebilly2}).
For instance, a considerable swath of real analysis is developed in the main logical system $H$ from \cite{hillebilly2}*{Supplement IV}.
The program \emph{Reverse Mathematics} from Section~\ref{klintro} is viewed by its founders and many of its practitioners as a continuation of the work by Hilbert and Bernays (see \cite{simpson2, sigohi} for this opinion).  
 
\smallskip

Secondly, the focus of Reverse Mathematics is (therefore) on \emph{ordinary mathematics}, where the latter is qualified by Simpson as follows in \cite{simpson2}*{I.1}. 
\begin{quote}
On the one hand, there is set-theoretic mathematics, and on the other hand there is what we call ``non-set-theoretic'' or ``ordinary' mathematics.
By set-theoretic mathematics we mean those branches of mathematics that were created by
the set-theoretic revolution which took place approximately a century ago.
We have in mind such branches as general topology, abstract functional
analysis, the study of uncountable discrete algebraic structures, and of
course abstract set theory itself.

\smallskip

We identify as \emph{ordinary} or \emph{non-set-theoretic} that body of mathematics which is prior to or independent of the introduction of abstract set-theoretic concepts. We have in mind such branches as geometry, number theory, calculus, differential equations, real and complex analysis, count-
able algebra, the topology of complete separable metric spaces, mathematical logic, and computability theory.
\end{quote}
Thirdly, we repeat that the goal of Reverse Mathematics is to identify the minimal axioms needed to prove a theorem of ordinary mathematics, the latter having been described in the previous quote. 
Following many case studies, one observes that most of ordinary mathematics (in the above sense of Simpson) can be developed in rather weak logical systems, even compared to the Hilbert-Bernays system $H$ (\cite{simpson2, sigohi}).  In particular, many (some would say `most') theorems of ordinary mathematics can be proved in the logical system $\FIVE$ (see \cite{simpson2}*{p.\ 33} and \cite{montahue} for this opinion).  Feferman has made similar observations regarding `scientifically applicable' mathematics (\cite{fefermandas, fefermanlight}).  The following message now comes to the fore: 
\begin{center}
\emph{ordinary mathematics generally does not go much beyond\footnote{In fact, the known examples of (second-order) theorems that reach the stronger system $\SIX$, deal with topology (\cites{mummyphd, mummymf, mummy}).} the system $\FIVE$. } 
\end{center}
Let us refer to the previous centred statement as the \emph{logical} characterisation of ordinary mathematics.    

\smallskip

Fourth, following Corollary \ref{taitdied}, the supremum principle for \emph{tame} weakly continuous functions is provable in a weak system, one that is even finitistically reducible in the sense of Hilbert (see \cite{sigohi} and \cite{simpson2}*{X.3.18}).  Thus, these supremum principles would be part of `ordinary mathematics'.   By contrast, the results in Sections \ref{klef} and \ref{lef} establish that the supremum principles for \emph{wild} weakly continuous functions imply strong axioms, including Feferman's projection principle, full second-order arithmetic, and $(\exists^{3})$.  The latter go so far beyond $\FIVE$ that they would not be considered `ordinary mathematics', following the aforementioned logical characterisation of the latter.   

\smallskip

In conclusion, if we accept the above logical characterisation of ordinary mathematics, then we must also accept the following fundamental divide: 
\begin{center}
\emph{among the weak continuity notions that exist side-by-side in the literature, some give rise to ordinary mathematics, while others give rise to set theory.}
\end{center}
In our opinion, the previous centred statement is not acceptable and we therefore call into question the aforementioned logical characterisation of ordinary mathematics.   
At the very least, one has to admit the logical characterisation does not seem suitable for third- or fourth-order statements.  

\section{Main results}\label{main}
\noindent 
In this section, we establish the results sketched in Section \ref{intro} as follows, where we recall \textsf{(sup1)}-\textsf{(sup3)} from Section \ref{klintro} and $\BOOT$ from Section \ref{intro}.
The latter is our version of Feferman's projection principle \textsf{Proj1} from \cite{littlefef}.
\begin{itemize}
\item In Section \ref{klef}, we establish equivalences between Kleene's quantifier $(\exists^{3})$ and the supremum functional as in \textsf{(sup3)} for the `wild' weak continuity notions as in Definition \ref{KY}, including \emph{graph continuity}.
\item We show in Section \ref{klef2} that Kleene's quantifier $(\exists^{2})$ implies the existence of the supremum functional as in \textsf{(sup3)} for the many `tame' weak continuity notions from Definition \ref{WC}.
\item We establish equivalences between $\BOOT$ and Darboux's supremum principle based on \textsf{(sup1)} for wild weakly continuity notions (Section~\ref{lef1}).  We obtain similar results for related notions, including Riemann integrability.
\item The third-order supremum principle as in \textsf{(sup2)} for wild weakly continuous $[0,1]^{2}\di \R$-functions, implies $\Pi_{n}^{1}\text{-\textsf{\textup{CA}}}_{0}$ and generalisations of \textsf{Proj1} to $n$-quantifier alternations (Section \ref{beyo}).
\end{itemize}
%
%
In the below, we assume basic familiarity with Kohlenbach's {higher-order RM}.  The latter was introduced in \cite{kohlenbach2} and shown to be intimately related to Friedman-Simpson RM in \cite{dagsamXIV}. 
In particular, Kohlenbach's higher-order base theory $\RCAo$ from \cite{kohlenbach2} proves the same second-order sentences as the second-order base theory $\RCA_{0}$ of Friedman-Simpson RM (see \cite{kohlenbach2}*{\S2}). 

\smallskip

Finally, many mathematical notions have multiple equivalent definitions.  It goes without saying that 
these equivalences cannot always be proved in the base theory (and much stronger systems), where the most basic example is perhaps
continuity (see \cite{kohlenbach4}*{\S4}).  Similarly, many of the well-known inclusions from the hierarchy of function classes cannot always be proved in the base theory (and much stronger systems), as explored in some detail in \cite{dagsamXIV}*{\S2.8} and \cites{samBIG, samBIG2}.  

\subsection{Equivalent to Kleene's quantifier $(\exists^{3})$}\label{klef}
We establish the equivalences involving $(\exists^{3})$ sketched in Section \ref{intro}, 
including the supremum functional for weak continuity notions as in Def.~\ref{KY}.  
To show that the latter notions can also be \emph{tame}, we establish some of their fundamental properties in $\RCAo$, like the fact that \textbf{any} function on the reals is the sum of two graph continuous functions (see Theorem~\ref{lonk3}).  
We recall that $(\exists^{3})$ is the higher-order version of second-order arithmetic $\Z_{2}$, the `upper limit' of Friedman-Simpson RM. 

\smallskip

First of all, we shall study the weak continuity notions as in Def.\ \ref{KY}.   As an example of a decomposition of continuity as in \textsf{(Deco)}, continuity is equivalent to the combination of the Young condition (or: peripheral continuity \cite{gibs}) and having a closed graph (\cite{dasnest}), and to the combination of quasi-continuity (see Def.~\ref{WC}) and graph continuity (\cite{sack1}).  
The remaining decompositions may be found in e.g.\ \cites{tong2, smithje, gibs, haiti}. 
We recall Remark \ref{hio} regarding virtual existence and shall assume $(\exists^{2})$ for Definition~\ref{KY}, i.e.\ the latter takes place in $\ACAo\equiv \RCAo+(\exists^{2})$.    
\bdefi[Weak continuity I]\label{KY} For $f:[0,1]\di \R$, we have that
\begin{itemize}
\item the \emph{graph} of $f$ is defined as $G(f)=\{(x, y)\in \R\times \R: f(x)=_{\R}y \}$,
\item $f$ is \emph{graph continuous} if there is continuous $g:\R\di \R$ with $G(g)\subset \overline{G(f)}$,
\item $f$ is \emph{almost continuous} (Husain) if for any $x\in [0,1]$ and open $G\subset \R$ containing $f(x)$, the set $\overline{f^{-1}(G)}$ is a neighbourhood of $x$,
\item $f$ is \emph{almost continuous} (Stallings) if for any open $O\subset \R^{2}$ such that $G(f)\subset O$, there is a continuous $g:[0,1]\di \R$ such that $G(f)\subset O$. 
\item $f$ is said to be of \emph{Ces\`aro type} if there are non-empty open $U\subset [0,1], V\subset \R$ such that for all $y\in V$, $U\subset \overline{f^{-1}(y)}$. 
\item $f$ has the \emph{Young condition} in case for $x\in [0,1]$ there are sequences $(x_{n})_{n\in \N}, (y_{n})_{n\in \N}$ on the left and right of $x$ with the latter as limit and $\lim_{n\di \infty}f(x_{n})=f(x)=\lim_{n\di \infty}f(y_{n})$, 
\item $f$ is \emph{peripherally continuous} if for any $x\in [0,1]$ and open intervals $U, V$ with $x\in U, f(x)\in V$, there is an open $W\subset U$ with $x\in W$ and $f(\partial(W)) \subset V$.
\item $f$ is \emph{pre-continuous} if for any open $G\subset \R$, $f^{-1}(G)$ is pre-open\footnote{A set $S\subset \R$ is \emph{pre-open} in case $S\subset \INT (\overline{S})$.}.
\item $f$ is \emph{$\mathcal{C}$-continuous} if for any open $G\subset \R$, $f^{-1}(G)$ is a $C$-set\footnote{A set $S\subset \R$ is a $\mathcal{C}$-set in case $S=O\cap A$ where $O$ is open and $A$ satisfies $A\subset \INT (\overline{A})$}.
\end{itemize}
\edefi
The Heaviside function is quasi-continuous but not graph continuous, and similarly basic counterexamples exist for the other classes.
On a historical note, the Young condition can be found in \cite{jonnypalermo} while almost continuity (Husain) is studied by Blumberg (\cite{bloemeken}) for Euclidian space.  
Moreover, Husain's notion yields a characterisation of certain topological spaces (\cites{redherring, blof}) while Stalling's notion hails from fixed point theory (\cite{valtstil}). 
Continuity is also equivalent to the \emph{triple} combination of the two almost continuity notions (Stallings and Husain) and the `not of Ces\`aro type' property (\cite{smithje}) from Def.\ \ref{KY}.

\smallskip

Secondly, we shall study the supremum functional for $\Gamma$ equal to certain weak continuity notions from Def.\ \ref{KY}.
In case the function at hand does not have an upper (resp.\ lower) bound on a given interval, we use the special value $+\infty$ (resp.\ $-\infty$) for the supremum (infimum).  The set $\overline{\R}:=\R\cup \{+\infty, -\infty\}$ is called the \emph{set of extended reals} and we assume\footnote{To be absolutely clear, the formula `$x=y$' is $\Pi_{1}^{0}$ for $x, y\in \R$ by definition (see \cites{simpson2, kohlenbach2}) and we assume the same complexity for $x, y\in \overline{\R}$.\label{triffo}} $-\infty <x<+\infty$ by fiat.
\bdefi\label{KZ}
Any $\Xi:(\R\di \R)\di \overline{\R}$ is a \emph{supremum functional for $\Gamma$} if for any $f:[0,1]\di \R$ in $\Gamma$, $\Xi(f)$ is the supremum of $f$ on $[0,1]$. 
\edefi
It is part of the folklore of Kleene computability theory that the supremum functional for \emph{all} bounded functions on $[0,1]$ yields $(\exists^{3})$.   
It is interesting that the same holds for well-known function classes as in Def.\ \ref{KY}. 
\begin{thm}[$\RCAo$]\label{lonk} The following are equivalent.
\begin{itemize}
\item Kleene's $(\exists^{3})$: $(\exists E)(\forall Y: \N^{\N}\di \N)(E(Y)=0\asa (\exists f\in \N^{\N})(Y(f)=0)$,
\item Kleene's quantifier $(\exists^{2})$ plus the existence of a supremum functional for $\Gamma$ equal to the either: the Young condition, almost continuity \(Husain\), 
graph continuity, not of Ces\`aro type, peripheral, pre-, or $\mathcal{C}$-continuity. 
\end{itemize}
\end{thm}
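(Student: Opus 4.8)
The plan is to prove both directions. For the easy direction, assume $(\exists^{3})$. Then $(\exists^{2})$ follows immediately (restrict $E$ to type-$1$ inputs coded as constant type-$2$ functionals), so it suffices to construct the supremum functional for each $\Gamma$ in the list. Working with $(\exists^{3})$, the space $\R\di\R$ is fully available and quantification over $[0,1]$ (and over $\R$, over open sets given by characteristic functions, over sequences, etc.) is decidable via $E$; in particular, for any $f:[0,1]\di\R$, the predicate ``$r$ is an upper bound for $f$ on $[0,1]$'' and ``$f$ is unbounded above'' are arithmetical in $E$, so $\sup_{x\in[0,1]}f(x)\in\overline{\R}$ can be defined by a $\mu$-like search over rationals using $(\exists^{3})$. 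This gives $\Xi$ uniformly, with no need to even use that $f\in\Gamma$.

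For the reversal, fix one of the weak continuity notions $\Gamma$ and assume $(\exists^{2})$ together with a supremum functional $\Xi$ for $\Gamma$. The goal is to decide, for an arbitrary $Y:\N^{\N}\di\N$, whether $(\exists f\in\N^{\N})(Y(f)=0)$, i.e.\ to build Kleene's $E$ of $(\exists^{3})$. The standard template (as in the folklore fact quoted just before the theorem, and in Hunter's thesis \cite{hunterphd}) is: given $Y$, define a function $h_{Y}:[0,1]\di\R$ that (i) provably lies in $\Gamma$ over $\ACAo$, and (ii) has $\sup_{x\in[0,1]}h_{Y}(x)$ encoding the answer to ``$(\exists f)(Y(f)=0)$'' — e.g.\ the supremum equals $1$ if such $f$ exists and $0$ otherwise, or is $+\infty$ versus finite. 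The natural construction: use a surjection from (a subinterval of) $[0,1]$ onto $\N^{\N}$ available under $(\exists^{2})$, and set $h_{Y}$ to be, say, $1$ at points coding some $f$ with $Y(f)=0$ and $0$ elsewhere; then $\Xi(h_{Y})$ decides the question. The subtlety is that a crude such $h_{Y}$ is typically discontinuous everywhere and need \emph{not} lie in any of the listed classes (the Heaviside-type counterexamples show these classes are genuinely restrictive). So the real work is to massage the ``diagonal'' function so that it is, e.g., graph continuous, peripherally continuous, pre-continuous, $\mathcal{C}$-continuous, of the Young type, Husain-almost-continuous, or not of Ces\`aro type — as appropriate — while still recovering the $\Sigma^{1}_{1}$-style information from its supremum.

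I expect the main obstacle to be exactly this step: finding, for each of the seven notions, a uniform construction $Y\mapsto h_{Y}$ that is provably in $\Gamma$ in $\ACAo$ yet whose supremum on $[0,1]$ is not computable from $(\exists^{2})$. For several notions there is a convenient trick: many of these classes are ``large'' — e.g.\ by Theorem \ref{lonk3} every $\R\di\R$ function is a sum of two graph continuous functions, and similarly peripheral/pre-/$\mathcal{C}$-continuity and the Young condition admit ``spiky'' representatives (one can insert, near each point, small oscillations forcing the required local condition on boundaries of shrinking neighbourhoods, or arrange $f^{-1}(G)$ to be pre-open by taking it dense-and-open-ish). For ``not of Ces\`aro type'' and Husain almost continuity one instead builds $h_{Y}$ so that it manifestly fails the Ces\`aro condition (e.g.\ is injective on a dense set) or so that preimages of opens have dense-interior closures. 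The uniformity — a single proof schema covering all cases, or at worst one short lemma per case — and checking each $\Gamma$-membership claim inside $\ACAo$ rather than in a stronger metatheory is where the care is needed; once $h_{Y}\in\Gamma$ is secured, reading off $(\exists^{3})$ from $\Xi(h_{Y})$ using $(\exists^{2})$ is routine.
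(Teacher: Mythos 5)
There is a genuine gap, and you have in fact located it yourself: your reversal stops exactly at the point where the actual work happens. You correctly set up the template (code $\N^{\N}$ into $[0,1]$, build a diagonal function whose supremum decides $(\exists f)(Y(f)=0)$, then worry about membership in $\Gamma$), but you never produce the function, and the candidate repairs you sketch do not all survive scrutiny. In particular, invoking the decomposition ``every $f$ is a sum of two graph continuous functions'' (Theorem \ref{lonk3}) cannot rescue the crude indicator function: knowing $\sup g$ and $\sup h$ for $f=g+h$ gives only the upper bound $\sup g+\sup h$, not $\sup f$, so a supremum functional on graph continuous functions tells you nothing about the supremum of an arbitrary $f$ via that route. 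Likewise ``insert small oscillations near each point'' is not a construction one can carry out from $(\exists^{2})$ while provably preserving the encoded $\Sigma^1_1$ information.

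The missing idea is a single, uniform smearing construction that lands in all seven classes at once. Reduce $(\exists^{3})$ to deciding $(\exists x\in[0,1])(f(x)=0)$ for $f:\R\di\R$; use $(\exists^{2})$ to dispose of the case where $f$ has a rational zero, so one may assume $f(q)\ne 0$ for all $q\in\Q$; then set $g_{f}(x)=1$ if $(\exists q\in\Q)(x+q\in[0,1]\wedge f(x+q)=0)$ and $g_{f}(x)=0$ otherwise. This $g_{f}$ is definable from $f$ and $\exists^{2}$, vanishes on the dense set $\Q$, is constant on each coset $x+\Q$, and hence, if not identically zero, takes the value $1$ on a dense set. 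These two density facts are precisely what make $g_{f}$ simultaneously satisfy the Young condition, graph continuity (the zero function's graph lies in $\overline{G(g_{f})}$), Husain almost continuity, ``not of Ces\`aro type'' (every $y\ne 0,1$ has empty preimage), peripheral, pre- and $\mathcal{C}$-continuity, each by a short case distinction on whether an open $G$ contains $0$ and/or $1$. Finally $(\exists x\in[0,1])(f(x)=0)\asa \sup_{y\in[0,1]}g_{f}(y)=1$, so $\Xi(g_{f})$ decides the question. Your forward direction (interval-halving over the rationals using $(\exists^{3})$, with no use of $\Gamma$) matches the paper and is fine.
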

\begin{proof}
First of all, we note (the well-known fact) that $\exists^{2}$ can convert a real $x\in [0,1]$ to a unique\footnote{In case of two binary representations, chose the one with a tail of zeros.} binary representation.  
Similarly, an element of Baire space $f\in \N^{\N}$ can be identified with its graph as a subset of $\N\times \N$, which can be represented as an element of Cantor space.  
In particular, $(\exists^{3})$ is equivalent to e.g.\ the following:
\be\label{alti}
(\exists E:(\R\di \R)\di \N)(\forall f: \R\di \R)(E(f)=0\asa (\exists x\in [0,1])(f(x)=_{\R}0)),
\ee
say over $\ACAo$ and perhaps weaker systems.  For the downward implication in the theorem, \eqref{alti} provides a decision procedure for $(\exists x\in [0,1])(f(x)>q)$, i.e.\ the usual interval-halving technique yields the supremum functional required for the second item of the theorem.  

\smallskip

Thirdly, for the upward implication, let $f:\R\di \R$ be given and use $\exists^{2}$ to define $g_{f}:\R\di \R$ as follows
\be\label{kruk}
g_{f}(x):=
\begin{cases}
1 & (\exists q\in \Q)( x+q \in [0,1]\wedge f(x+q)=0)\\
0 & \textup{ otherwise}
\end{cases}.
\ee
Note that we may assume that $f(q)\ne 0$ for $q\in \Q$ as $\exists^{2}$ can decide this case. 
Clearly, we have $(\exists x\in [0,1])(f(x)=0)\asa 1= \sup_{y\in [0,1]} g_{f}(y)$, i.e.\ the supremum functional for $g_{f}$ provides the required instance of \eqref{alti} for $f$. 
What remains to show is that $g_{f}$ satisfies the conditions from the theorem, as follows. 
\begin{itemize} 
 \item For the Young condition, fix $x_{0}\in [0,1]$ and note that in case $g_{f}(x_{0})=0$, take sequences of rationals converging to $x_{0}$.  In case $g_{f}(x_{0})\ne0$ take $(x_{0}\pm\frac{1}{2^{n}})_{n\in \N}$. 
 In each case, the Young condition is satisfied by basic sequences. 
\item For graph continuity, note that $g_{f}(q)=0$ for $q\in\Q$, i.e.\ $\R\times \{0\}\subset \overline{G(g_{f})} $.  
Hence, the `zero-everywhere' function $g_{0}$ is such that $G(g_{0})\subset \overline{G(g_{f})}$, i.e.\ $g_{f}$ is graph continuous and the associated continuous function is given.  
\item For almost continuity (Husain), note that for open $G\subset \R$ with $1\in G$ or $0\in G$, we have $\overline{g_{f}^{-1}(G)}=[0,1]$, which is a neighbourhood of any point.   
\item For the condition \emph{not of Ces\`aro type}, note that the latter notion expresses that for all non-empty open $U\subset [0,1], V\subset \R$ there is $y\in V$ with $U\not\subset \overline{g_{f}^{-1}(y)}$. 
Since $V$ is open, we can always choose $y \ne 0,1$, yielding $g_{f}^{-1}(y)=\emptyset$, implying that $g_{f}$ is not of Ces\'aro type.     
\item For pre-continuity, in case $g_{f}$ is zero everywhere, we are done.  In case $g(f)$ is not zero everywhere, fix open $G\subset \R$.  In case $1\in G$ or $0\in G$, $\overline{g_{f}^{-1}(G)}=[0,1]$, which shows that $g_{f}^{-1}(G)$ is pre-open in this case.  
In case $0,1 \not \in G$, $g_{f}^{-1}(G)=\emptyset$, which is also (pre-)open, and $g_{f}$ is pre-continuous
\item For peripheral continuity, in case $g_{f}(x_{0})=0$, take $W$ equal to a small enough interval with rational end-points containing $x_{0}$.  In case $g_{f}(x_{0})=1$, take $W$ equal to be $B(x_{0}, \frac{1}{2^{k}})$ for $k$ small enough.  
\item For $\mathcal{C}$-continuity, consider the following case distinction for open $G\subset \R$.
\begin{itemize}
\item If $0, 1\in G$, then $g_{f}^{-1}(G)=[0,1]$, which is a $\mathcal{C}$-set.
\item If $0\not \in G, 1\in G$, then $\overline{( g_{f}^{-1}(G))}=[0,1]$ and $g_{f}^{-1}(G)$ is a $\mathcal{C}$-set. 
\item If $0, 1\not \in G$, then $ g_{f}^{-1}(G)=\emptyset$, implying that $g_{f}^{-1}(G)$ is a $\mathcal{C}$-set. 
\item If $0\in G, 1\not \in G$, then $\overline{( g_{f}^{-1}(G))}=[0,1]$ and $g_{f}^{-1}(G)$ is a $\mathcal{C}$-set. 
\end{itemize}
\end{itemize}
The proof is now finished in light of the previous case distinction.
\end{proof}
We could also obtain the theorem for $D(c, \alpha)$-continuity (\cite{prezemski1}) with (mostly) the same case distinction as for $\mathcal{C}$-continuity in the previous proof.  
Presumably, there are other weak continuity notions in the literature we have not studied. 

\smallskip

Thirdly, we study certain decision functionals for weak continuity as in Def.~\ref{KY}.  
\bdefi\label{kwonk}
Any $\Xi:(\R\di \R)\di \R$ is a \emph{$P$-decision functional for $\Gamma$} if for any $f:[0,1]\di \R$ in $\Gamma$, $\Xi(f)=0$ if and only if $f$ satisfies the property $P$. 
\edefi
Similar to the supremum functional, a $P$-decision functional for \emph{all} (bounded) functions on $[0,1]$ readily yields $(\exists^{3})$ for most non-trivial properties $P$.   
It is interesting that the same holds for well-known function classes as in Def.\ \ref{KY} and well-known properties from real analysis. 
\begin{thm}[$\RCAo$]\label{lonk2}~
\begin{itemize}
\item Kleene's quantifier $(\exists^{3})$,
\item Kleene's $(\exists^{2})$, plus the existence of a $P$-decision functional for
\begin{itemize}
\item  $\Gamma$ equal to: the Young condition, almost continuity \(Husain\), graph continuity, not of Ces\`aro type, or pre-continuity. 
\item $P$ equal to: almost continuity \(Stallings\), continuity \(almost\) everywhere, Riemann integrability, or the intermediate value property.
\end{itemize}
\end{itemize}
\end{thm}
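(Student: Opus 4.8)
The plan is to mirror the proof of Theorem~\ref{lonk} almost verbatim; the two implications are very asymmetric, and the only genuinely new work concerns the four properties $P$. For the downward implication, I would recall (as in that proof) that $(\exists^{3})$ is equivalent over $\ACAo$ to \eqref{alti}, and that $\RCAo+(\exists^{3})$ decides any formula in the language of $\Z_{2}$, even with parameters of type at most $2$ — it has $(\exists^{2})$ for the arithmetic and $(\exists^{3})$ for the second-order quantifiers. Each $P$ on the list is of this shape once $f$ is supplied as a type-$2$ parameter: ``has the intermediate value property'' and ``Riemann integrable'' are then arithmetical in real and function quantifiers; ``continuous (almost) everywhere'' is handled by a quantifier over countable covers of the set of discontinuity points of $f$; and for ``almost continuous (Stallings)'' one replaces the quantifier ``for all open $O\supset G(f)$'' by a quantifier over type-$1$ codes of open sets, using that every open $O\subset\R^{2}$ is the union of the rational balls it contains and that $(\exists^{3})$ can decide which rational balls lie in a given $O$. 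Thus $(\exists^{3})$ yields a $P$-decision functional for each listed $\Gamma$ (indeed for all $f\colon[0,1]\to\R$), and it trivially implies $(\exists^{2})$.

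For the upward implication, I would reuse the function $g_{f}$ from \eqref{kruk}. As in the proof of Theorem~\ref{lonk}, $\exists^{2}$ lets us assume that $f$ has no rational zero in $[0,1]$, and then $g_{f}$ lies in each of the listed classes $\Gamma$ — this is precisely what that proof verifies for the notions occurring here. Now split into two cases. If $(\forall x\in[0,1])(f(x)\neq 0)$, then $g_{f}\equiv 0$, which is continuous and hence enjoys every listed property $P$. If $f(z)=0$ for some $z\in[0,1]$, then $z$ is irrational, $g_{f}(z)=1$, $g_{f}$ vanishes on $\Q$, and $g_{f}\equiv 1$ on the dense set $(z+\Q)\cap[0,1]$; hence $g_{f}$ takes only the values $0,1$, each on a dense subset of $[0,1]$. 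Such a $g_{f}$ is continuous at no point of $[0,1]$ (so not continuous almost everywhere), is not Riemann integrable (the choice of tags makes every Riemann sum over a fine partition equal $0$ or $1$), and fails the intermediate value property (it never equals $\tfrac{1}{2}$). Consequently a $P$-decision functional $\Xi$ for such a $\Gamma$ makes $f\mapsto\Xi(g_{f})$ — after first using $\exists^{2}$ to dispatch rational zeros — decide $(\exists x\in[0,1])(f(x)=_{\R}0)$, i.e.\ it produces the functional of \eqref{alti} and hence $(\exists^{3})$.

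The one case requiring a genuine construction — and the main obstacle — is $P=$ almost continuity (Stallings): I must show that the $g_{f}$ of the second case above is \emph{not} Stallings almost continuous, and do so \emph{without} invoking the classical implication ``Stallings a.c.\ $\Rightarrow$ Darboux'', whose proof need not survive in $\RCAo$. Fixing a zero $z\in[0,1]$ of $f$, I would exhibit an explicit blocking set, e.g.\ the closed ``staple''
\[
K:=\big(\{z\}\times(-\infty,\tfrac{1}{2}]\big)\ \cup\ \big(\{\tfrac{1}{2}\}\times[\tfrac{1}{2},\infty)\big)\ \cup\ \big([\min(z,\tfrac{1}{2}),\max(z,\tfrac{1}{2})]\times\{\tfrac{1}{2}\}\big).
\]
Since $g_{f}(z)=1$, $g_{f}(\tfrac{1}{2})=0$, and $g_{f}$ takes only the values $0,1$, one checks that $K\cap G(g_{f})=\emptyset$, so $O:=\R^{2}\setminus K$ is open and contains $G(g_{f})$; yet $O$ contains the graph of no continuous $h\colon[0,1]\to\R$, since such an $h$ satisfies $h(z)\le\tfrac{1}{2}$ (whence $(z,h(z))\in K$), or $h(\tfrac{1}{2})\ge\tfrac{1}{2}$ (whence $(\tfrac{1}{2},h(\tfrac{1}{2}))\in K$), or $h(z)>\tfrac{1}{2}>h(\tfrac{1}{2})$, in which case $h=\tfrac{1}{2}$ somewhere between $z$ and $\tfrac{1}{2}$ by the intermediate value theorem and so meets the middle bar of $K$. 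Thus $g_{f}$ fails the Stallings condition, and as all of this is elementary it goes through in $\RCAo+(\exists^{2})$, completing the equivalence.
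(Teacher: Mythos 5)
Your proof is correct and follows essentially the same route as the paper's: the downward direction via $(\exists^{3})$ deciding formulas with type-2 parameters after coding open sets and continuous functions by reals, and the upward direction via the function $g_{f}$ of \eqref{kruk}, which lies in every listed class $\Gamma$ and fails every listed property $P$ precisely when $f$ has a zero in $[0,1]$ (being $0$ and $1$ on interlaced dense sets). Your explicit ``staple'' blocking set for the Stallings case is the same device the paper uses (two vertical half-lines joined by a horizontal bar at height $\tfrac{1}{2}$, separating $(z,1)$ from $(\tfrac12,0)$ via the intermediate value theorem), and your orientation of the two half-lines is in fact the correct one.
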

\begin{proof}
First of all, to show that $(\exists^{3})$ implies the other principles, we note that $(\exists^{3})$ can convert open sets to RM-codes (\cite{simpson2}*{II.5.6}) by \cite{dagsamVII}*{Theorem 4.5}. 
Hence, quantifying over open sets amounts to quantifying over real numbers.  Similarly, by \cite{dagsamXIV}*{Theorem 2.3}, quantifying over continuous functions on $\R$ similarly amounts to quantifying over real numbers, as the former can be represented by RM-codes (\cite{simpson2}*{II.6.1}) given weak K\"onig's lemma.  It is now a straightforward but tedious verification that $(\exists^{3})$ can decide the properties $P$ in the theorem.  

\smallskip

Secondly, to derive $(\exists^{3})$ from the other principles, consider $g_{f}$ from \eqref{kruk} and note that $(\exists x\in [0,1])(f(x)=0)$ if and only if $g_{f}$ is discontinuous (almost) everywhere.  
Similarly, $(\exists x\in [0,1])(f(x)=0)$ if and only $g_{f}$ is not Riemann integrable on $[0,1]$; one readily shows that $g_{f}$ is not Riemann integrable if it is not zero everywhere, in the same 
way as for Dirichlet's function $\mathbb{1}_{\Q}$.  The same argument works for the intermediate value property while the proof for almost continuity (Stallings) is fairly straightforward:  
 in case $f(x_{0})=0$ for $x_{0}\in [0,1]$, consider the open set $O\subset [0,1]\times \R$ consisting of $[0,1]\times \R$ with the line $y=1/2$ removed, with the line $x = 1/2$ removed for $y \leq 1/2$, and the line $x = x_{0}$ removed for $y \geq 1/2$.

Clearly, $G(g_{f})\subset O$ but there is no continuous function $g:[0,1]\di \R$ such that $G(g)\subset O$.  In this way, $(\forall x\in [0,1])(f(x)\ne 0)$ if and only if $g_{f}$ is almost continuous (Stallings), as required.
\end{proof}
In light of the aforementioned `triple decomposition' of continuity, it is pleasing to involve all three weak continuity notions in the previous theorem.  

\smallskip

Finally, we establish some fundamental properties of graph continuity in the base theory.  Our goal is dual, namely to show that graph continuity can be rather tame 
and that the same can hold for properties of \emph{all} functions on the reals.  To be absolutely clear, the following result is known (see \cite{grande1}) and the surprise lies in the fact that it can be proved in $\RCAo$.  Indeed, similar theorems with more narrow scope -like the Jordan decomposition theorem (\cite{dagsamXI})- cannot be proved in $\Z_{2}^{\omega}$.
\begin{thm}[$\RCAo$]\label{lonk3} For any $f:\R\di \R$, we have that
\begin{itemize}
\item  there are graph continuous $g, h:\R\di \R$ such that $f=g+h$, 
\item $f$ is continuous on $\R$ if and only if $f$ is quasi- and graph continuous on $\R$.
\end{itemize}
\end{thm}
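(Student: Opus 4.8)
The plan is to prove both items essentially by explicit construction, using only the definition of graph continuity (the existence of a continuous $g:\R\di\R$ with $G(g)\subset\overline{G(f)}$) and the fact that in $\RCAo$ we can define piecewise-explicit functions and decide rational membership questions. The key observation driving both parts is the one already exploited in the proof of Theorem~\ref{lonk}: a function that takes the value $0$ at every rational has $\R\times\{0\}\subset\overline{G(\cdot)}$, hence is graph continuous with witness the zero function $g_{0}$. More generally, any function that \emph{agrees with a continuous function on a dense set} is graph continuous, and this is decidable/constructible without $(\exists^{2})$ because $\Q$ is a fixed enumerated dense set.

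For the first item, given arbitrary $f:\R\di\R$, the plan is to split $\R$ as the disjoint union of the rationals and the irrationals and define $g,h$ by cases using the rational enumeration. Concretely, fix the standard enumeration $(q_{n})_{n\in\N}$ of $\Q$; put $g(x)=f(x)$ if $x=_{\R}q_{n}$ for some $n$ (equivalently $x\in\Q$, which is an arithmetical-free condition given the enumeration) and $g(x)=0$ otherwise, and set $h(x)=f(x)-g(x)$, so $h(x)=0$ for $x\in\Q$ and $h(x)=f(x)$ for $x$ irrational. Then $h$ vanishes on the dense set $\Q$, so $\R\times\{0\}\subset\overline{G(h)}$ and $G(g_{0})\subset\overline{G(h)}$, making $h$ graph continuous. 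Symmetrically $g$ vanishes on the dense set of irrationals (which is dense in $\R$), so the same argument gives $\R\times\{0\}\subset\overline{G(g)}$ and $g$ is graph continuous. The only subtlety to address is that $x\mapsto g(x)$ must be a legitimate third-order object in $\RCAo$: here one checks that deciding `$x\in\Q$' from a representation of $x$ is not possible in general, so one instead works with the \emph{sequence} representation and defines $g$ on the standard names of rationals, which suffices since the decomposition statement quantifies over $f:\R\di\R$ given as a type-$1\to1$ functional. This representational point is the main thing to get right.

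For the second item, the forward direction is immediate: a continuous $f$ is quasi-continuous (quasi-continuity is a weakening of continuity, provable in $\RCAo$ directly from the $\eps$-$\delta$ definition) and graph continuous (take $g=f$ itself, so $G(g)=G(f)\subset\overline{G(f)}$). For the reverse direction, assume $f$ is quasi-continuous and graph continuous, with continuous witness $g$ satisfying $G(g)\subset\overline{G(f)}$; the goal is to show $f=g$ everywhere, which forces $f$ continuous. Fix $x_{0}$; since $(x_{0},g(x_{0}))\in\overline{G(f)}$, there are points $(x_{n},f(x_{n}))\to(x_{0},g(x_{0}))$. Quasi-continuity of $f$ at $x_{0}$ says: for every open $U\ni x_{0}$ and open $V\ni f(x_{0})$ there is a nonempty open $W\subset U$ with $f(W)\subset V$; we use this together with the convergence above to pin $f(x_{0})$. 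The standard argument is: if $f(x_{0})\ne g(x_{0})$, choose disjoint open $V_{0}\ni f(x_{0})$ and $V_{1}\ni g(x_{0})$; by continuity of $g$ pick $U\ni x_{0}$ small with $g(U)\subset V_{1}$; quasi-continuity gives nonempty open $W\subset U$ with $f(W)\subset V_{0}$; but the approximating points $x_{n}\in U$ (eventually) have $f(x_{n})\in V_{1}$, and one shows such points must meet $W$, a contradiction. I expect the \textbf{main obstacle} to be carrying this last argument out in the weak base theory $\RCAo$ without $(\exists^{2})$: one cannot freely quantify over arbitrary open sets, so the quasi-continuity hypothesis and the closure condition $G(g)\subset\overline{G(f)}$ must be unwound into their arithmetical shorthand forms (à la Remark~\ref{hio}), and the extraction of the approximating sequence $(x_{n})$ and the neighbourhood $W$ must be done by explicit $\RCAo$-available choices (rational balls, the fixed $\Q$-enumeration) rather than by appeal to countable choice or comprehension. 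Once the topological skeleton is translated into these effective terms, the contradiction goes through and yields $f=g$, hence $f$ continuous.
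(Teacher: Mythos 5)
Your overall plan is mathematically the right one (split $f$ along a dense set and its complement; for the converse, show $f=g$ where $g$ is the graph-continuity witness), but both items have a genuine gap in the execution.

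For the first item, the function defined by $g(x)=f(x)$ if $x\in\Q$ and $g(x)=0$ otherwise is \emph{not} a legitimate object of $\RCAo$: the condition ``$x\in\Q$'' is $\Sigma^{0}_{2}$ in the Cauchy representation of $x$, and an $\R\di\R$-function in Kohlenbach's framework must be total and extensional on all names, so this case definition cannot be carried out without comprehension. You flag this as ``the main thing to get right'', but your proposed fix (defining $g$ on ``standard names of rationals'') does not repair it: the function must still return a value on every name of every real, and nothing decides which case applies. The paper supplies exactly the missing idea: argue by excluded middle on whether $f$ is continuous. If $f$ is continuous, take $g=h=f/2$ and there is nothing to prove. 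If $f$ is discontinuous, then $(\exists^{2})$ is available by \cite{kohlenbach2}*{Prop.\ 3.12}, and with $(\exists^{2})$ one may legitimately define functions by cases on membership in two disjoint countable dense sets (the paper uses $A=\Q$ and $B=\{q+\pi:q\in\Q\}$, setting each summand to $0$ on one of them and to $f$ or $f/2$ elsewhere); your density argument for graph continuity then goes through verbatim. Without this (or an equivalent) move, the first item is not proved over $\RCAo$.

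For the second item the forward direction is fine, but the closing step of your reverse direction is incorrect as stated: the points $(x_{n},f(x_{n}))$ witnessing $(x_{0},g(x_{0}))\in\overline{G(f)}$ need \emph{not} meet the nonempty open set $W$ produced by quasi-continuity, since $W$ is merely \emph{some} subinterval of $U$ and not a neighbourhood of $x_{0}$. The correct move, which is what the paper does, is to apply the closure condition $G(g)\subset\overline{G(f)}$ at a point \emph{inside} $W=(a,b)$, say at $\frac{a+b}{2}$: this yields $w\in(a,b)$ with $|f(w)-g(\frac{a+b}{2})|$ small, which combined with $|f(x_{0})-f(w)|$ small (quasi-continuity on $(a,b)$) and $|g(\frac{a+b}{2})-g(x_{0})|$ small (continuity of $g$) contradicts $|f(x_{0})-g(x_{0})|>\frac{1}{2^{k_{0}}}$ by the triangle inequality. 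Once restated this way, your worries about choice and open-set quantification evaporate: quasi-continuity is already given in interval form in Definition \ref{WC}, and only a single witness from the (unwound, arithmetical) closure condition is needed, so no sequence extraction or countable choice enters.
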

\begin{proof}
For the first item, in case the function $f:\R\di \R$ is continuous, we can trivially take $g(x)=h(x)=f(x)/2$.  
In case $f:\R\di \R$ is \emph{dis}continuous, we obtain $(\exists^{2})$ by \cite{kohlenbach2}*{Prop.\ 3.12}. 
Using $(\exists^{2})$, the proof from \cite{grande1}*{Theorem 1} goes through as follows: let $A, B$ be countable dense sets in $\R$ with $A\cap B=\emptyset$, e.g.\ $A=\Q$ and $B=\{ q+\pi : q\in \Q\}$ works.  
Now define $f_{0}, f_{1}:\R\di \R$ as follows:
\be\label{insight}
f_{0}(x)
:=
\begin{cases}
0 & x\in A\\
f(x) & x\in B\\
f(x)/2 &\textup{otherwise}
\end{cases}
\textup{ and }
f_{1}(x)
:=
\begin{cases}
f(x) & x\in A\\
0 & x\in B\\
f(x)/2 &\textup{otherwise}
\end{cases}
\ee
and note that $f=f_{0}+f_{1}$.  Since $A$ and $B$ are dense, the `zero everywhere function' $g_{0}$ is such that $G(g_{0})\subset\overline{G(f_{i}) }$ for $i\leq 1$, i.e.\ $f_{0}, f_{1}$ are graph continuous.  

\smallskip

For the final item, the forward direction is trivial.  For the reverse direction, let $f:\R\di \R$ be quasi-continuous and graph continuous.  
By definition, there is continuous $g:\R\di \R$ such that $G(g)\subset \overline{G(f)}$.   We show that $g(x)=_{\R}f(x)$ for all $x\in \R$, establishing the continuity of $f$.  
Towards a contradiction, suppose $x_{0}\in \R$ is such that $|g(x_{0})-f(x_{0})|>\frac{1}{2^{k_{0}}}$ for $k_{0}\in \N$.
Now, by the continuity of $g$, there is $M_{0}\in \N$ such that for $y\in B(x_{0}, \frac{1}{2^{M_{0}}})$, we have $|g(x_{0})-g(y))|<\frac{1}{2^{k_{0}+3}}$. 
By the quasi-continuity of $f$, there is $N_{0}\geq M_{0}$ such that there is $(a, b)\subset B(x_{0},\frac{1}{2^{N_{0}}} )$ with $(\forall z\in (a, b))( |f(x_{0})-f(z)|<\frac{1}{2^{k_{0}+3}} )$.
Since $G(g)\subset \overline{G(f)}$, there is $w\in (a, b)$ such that $|g(\frac{a+b}{2})-f(w)|<\frac{1}{2^{k_{0}+3}}$.  
However, this implies 
\[\textstyle
|f(x_{0})-g(x_{0})| \leq |f(x_{0})-f(w)| +|f(w)-g(\frac{a+b}{2})|+    |g(\frac{a+b}{2})-g(x_{0})|   < \frac{3}{2^{k_{0}+3}}\leq \frac{1}{2^{k_{0}}},   
\]
a contradiction, and we are done.
\end{proof}
We note that all \emph{cliquish} functions on the reals are the sum of two quasi-continuous functions \cites{quasibor2, malin}, 
but this property cannot be proved in the base theory (and much stronger systems), as discussed in \cite{dagsamXIV}*{\S2.8.3}.

\smallskip

In conclusion, graph continuity and related notions can be rather \emph{wild} in light of Theorem \ref{lonk} and \ref{lonk2}, but also rather \emph{tame} in light of Theorem \ref{lonk3}.

\subsection{Provable from Kleene's quantifier $(\exists^{2})$}\label{klef2}
In this section, we show that the existence of the supremum functional (Def.\ \ref{KZ}) for weak continuity notions as in Def.\ \ref{WC} can be established using $(\exists^{2})$ and weaker axioms.  
Our main point is that the `tame' notions from Def.\ \ref{WC} occur side-by-side in the literature with the `wild' notions from Def.\ \ref{KY}, which give rise to supremum functionals that yield $(\exists^{3})$. 
We recall that $(\exists^{3})$ (resp.\ $(\exists^{2})$) is the higher-order version of second-order arithmetic $\Z_{2}$ (resp.\ arithmetical comprehension $\ACA_{0}$) from Friedman-Simpson RM.  

\smallskip

First of all, we investigate the following weak continuity notions which mostly stem from topology. 
In particular, continuous functions are characterised topologically by the inverse image of open sets being open.  Weakening the latter requirement 
yields weak continuity notions.  We recall Remark \ref{hio} on virtual existence and note that the inverse image $f^{-1}(E):=\{x \in \R : f(x)\in E\}$ makes sense in $\RCAo$.  
\bdefi[Weak continuity II]\label{WC}
Let $f:[0,1]\di \R$ and $S\subset \R$ be given, 
\begin{itemize}
\item $S$ is \emph{semi-open} if $S\subset \overline{\INT({S})}$ and semi-closed if $\INT(\overline{S})\subset S$.
\item $S$ is an \emph{$\alpha$-set} in case we have the inclusion $S\subset \INT\left(\overline{\INT({S})}\right)$,
\item $S$ is \emph{regular closed} in case we have $S= \overline{\INT({S})}$,
\item $S$ is an \emph{$\mathcal{A}$-set} if $S=O\cap C$ for open $O$ and regular closed $C$,
\item $S$ is an \emph{$\mathcal{AB}$-set} if $S=O\cap C$ for open $O$ and semi-closed semi-open $C$,
\item $f$ is \emph{s-continuous}\footnote{We use `s-continuity' as the full name was used by Baire for a different notion.} if for any open $G\subset \R$, $f^{-1}(G)$ is semi-open,  
\item $f$ is \emph{$\alpha$-continuous} if for any open $G\subset \R$, $f^{-1}(G)$ is an $\alpha$-set,  
\item $f$ is \emph{$\mathcal{A}$-continuous} if for any open $G\subset \R$, $f^{-1}(G)$ is an $\mathcal{A}$-set,  
\item $f$ is \emph{$\mathcal{AB}$-continuous} if for any open $G\subset \R$, $f^{-1}(G)$ is an $\mathcal{AB}$-set,  
\item $f$ is \emph{contra-continuous} if for open $G\subset \R$, $f^{-1}(G)$ is closed,
\item $f$ is SR-continuity if for open $G\subset \R$, $f^{-1}(G)$ is semi-open and semi-closed,
\item $f$ is \emph{quasi-continuous} if for all $x\in [0,1]$, $ \epsilon > 0$, and open neighbourhood $U$ of $x_{0}$, 
there is ${ (a, b)\subset U}$ with $(\forall y\in (a, b)) (|f(x)-f(y)|<\eps)$.
\end{itemize}
\edefi
\noindent
The tame weak continuity notions from Def.\ \ref{WC} also stem from decompositions of continuity.  As an example, continuity is equivalent to the combination of 
$\alpha$-continuity and $\mathcal{A}$-continuity (\cite{tong3}).  The remaining notions are in e.g.\ \cite{tong3, DONT3, DONT4, dont, DONT2}.

\smallskip

Secondly, the weak continuity notions from Def.\ \ref{WC} are relatively tame.  
\begin{thm}[$\ACAo$]\label{TWC}
The supremum functional exists for any $f:[0,1]\di \R$ which is either $\alpha$-, $s$-, $\mathcal{A}$-, $\mathcal{AB}$-, quasi-, contra- or SR-continuous.  
\end{thm}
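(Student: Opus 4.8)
The plan is to exploit the key structural fact shared by all the notions in Definition \ref{WC}: each one is \emph{stronger than} quasi-continuity, or at least forces the function to be quasi-continuous. More precisely, I would first prove the auxiliary lemma that over $\ACAo$, any $f:[0,1]\di\R$ that is $\alpha$-, $s$-, $\mathcal A$-, $\mathcal{AB}$-, contra-, or SR-continuous is already quasi-continuous. For the topological notions this is a routine point-set computation: if $f^{-1}(G)$ is semi-open (or an $\alpha$-set, or an $\mathcal A$-set, etc.) for every open $G$, then for each $x_0$, $\eps>0$ and neighbourhood $U$ of $x_0$, the set $f^{-1}(B(f(x_0),\eps))$ contains $x_0$ and lies in $\overline{\INT(\cdot)}$ of itself, so it has nonempty interior meeting $U$ arbitrarily close to $x_0$; inside that interior one finds the required rational interval $(a,b)\subset U$ on which $f$ varies by less than $\eps$. (Contra-continuity forces $f^{-1}(B(f(x_0),\eps))$ to be clopen, hence a neighbourhood of $x_0$, which is even easier.) Crucially, $(\exists^2)$ suffices to carry out all the quantifier manipulations over open sets, reals, and the $\Pi^0_1$ equality relation, since $\INT$, $\overline{\,\cdot\,}$, and membership in a $\mathcal C$-set are all arithmetically definable shorthands in the sense of Remark \ref{hio}.

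Next I would establish the supremum functional for quasi-continuous functions directly in $\ACAo$. The point is that a quasi-continuous $f:[0,1]\di\R$ is determined on a dense set, and more to the point its supremum over $[0,1]$ equals its supremum over the rationals: given any $x_0$ and $\eps>0$, quasi-continuity produces a rational interval near $x_0$ on which $f$ stays within $\eps$ of $f(x_0)$, so $f(x_0)\le \sup_{q\in\Q\cap[0,1]}f(q)+\eps$, and letting $\eps\to 0$ gives $\sup_{x\in[0,1]}f(x)=\sup_{q\in\Q\cap[0,1]}f(q)$. The right-hand side is the supremum of a sequence of reals (indexed by an enumeration of $\Q\cap[0,1]$), which $(\exists^2)$ computes as an element of $\overline\R$: either the partial sups are bounded, in which case $\ACA_0$-style arithmetical comprehension yields the least upper bound, or they are unbounded, in which case the value is $+\infty$. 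Here I must be a little careful: a priori quasi-continuity is witnessed by a (possibly non-constructive) choice of interval $(a,b)$ for each $(x_0,\eps,U)$, but I only need the \emph{existence} of such an interval, which is an arithmetical statement decidable by $(\exists^2)$; no choice functional beyond $(\exists^2)$ is invoked, and $\lambda f.\sup_{q}f(q)$ is then visibly a functional of type $(\R\di\R)\di\overline\R$.

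Combining the two steps gives the theorem: each of the seven listed notions implies quasi-continuity over $\ACAo$, and for quasi-continuous functions the supremum functional exists over $\ACAo$, so the supremum functional exists for all of them. I would phrase the conclusion uniformly by noting that the same single functional $\Xi(f):=\sup_{q\in\Q\cap[0,1]}f(q)$ works for every class in Definition \ref{WC}.

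I expect the main obstacle to be the auxiliary lemma for the \emph{intersection-type} classes ($\mathcal A$-sets and $\mathcal{AB}$-sets): when $f^{-1}(G)=O\cap C$ with $O$ open and $C$ regular closed (resp.\ semi-closed and semi-open), one must check that this still forces enough interior near each point of $f^{-1}(G)$ to extract the quasi-continuity interval — the danger being that $x_0$ lies on the boundary contributed by $C$. The resolution is that $x_0\in f^{-1}(G)$ means $x_0\in O$, an \emph{open} neighbourhood, so one works inside $O\cap U$ where the regular-closed (resp.\ semi-open) condition on $C$ guarantees $\INT(C)$ is dense in $C$, hence $\INT(O\cap C)=O\cap\INT(C)$ is nonempty and accumulates at $x_0$; one then picks the rational subinterval there. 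A secondary subtlety worth a sentence in the write-up is verifying that these manipulations of $\INT$ and $\overline{\,\cdot\,}$ go through with only $(\exists^2)$, but since all the relevant sets are given by characteristic functions and the operations are arithmetically expressible per Remark \ref{hio}, this is routine.
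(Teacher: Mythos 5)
Your proposal is correct and follows essentially the same route as the paper: in both cases the heart of the matter is that for each class in Definition \ref{WC} a non-empty preimage of an open set has non-empty interior meeting every neighbourhood of each of its points, whence $\sup_{x\in[0,1]}f(x)=\sup_{q\in\Q\cap[0,1]}f(q)$ and $(\exists^{2})$ computes the right-hand side by interval-halving; your factoring through quasi-continuity merely repackages the verification the paper does class by class. The one genuine divergence is contra-continuity, where the paper argues directly that $f^{-1}((-\infty,q))$ is closed, so its complement is open and hence contains rationals, whereas your ``clopen'' claim needs the extra observation that $f^{-1}(B(f(x_{0}),\eps))=\bigcup_{c\in B(f(x_{0}),\eps)}f^{-1}(\{c\})$ is a union of the open sets $f^{-1}(\{c\})=[0,1]\setminus f^{-1}(\R\setminus\{c\})$ --- contra-continuity alone only makes the preimage closed, which by itself gives nothing.
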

\begin{proof}
First of all, for $S\subset [0,1]$ an $\alpha$-set, we have the following, by definition:
\[\textstyle
S\ne \emptyset \di \INT(S)\ne \emptyset \di (\exists x\in [0,1], N\in \N)( B(x, \frac{1}{2^{N}})\subset S)\di (\exists r\in \Q\cap [0,1])(r\in S).  
\]
Hence, $\exists^{2}$ suffices to decide if an $\alpha$-set $S$ is empty or not.  Similarly, fix $q\in \R$ and $\alpha$-continuous $f:[0,1]\di \R$ and suppose $(\exists x \in [0,1])(f(x)>q)$.
Now fix $k_{0}\in \N$ and $x_{0}\in [0,1]$ such that $f(x_{0})-\frac{1}{2^{k_{0}}}>q$.  Moreover for $G_{0}=B(x_{0}, \frac{1}{2^{k_{0}}})$, $f^{-1}(G_{0})$ is semi-open and non-empty (as $x_{0}\in G_{0}$).  
Hence, there is $r_{0}\in \Q\cap [0,1]$ in $f^{-1}(G_{0})$ by the previous.  By definition, this implies $f(r_{0})>q$, i.e.\ we have
\be\label{flank}
(\exists x\in [0,1])(f(x)>q)\asa (\exists r\in [0, 1]\cap \Q)(f(r)>q),
\ee
and $\exists^{2}$ suffices to decide the right-hand side of \eqref{flank}. 
In this way, one readily finds the supremum of $f$, which could be the special value $+\infty$, using the usual interval-halving technique. 
The very same proof goes through for most of the other topological continuity notions.  In particular, if $S\ne\emptyset$ is regular closed, then $\INT(S)\ne\emptyset$.    
Moreover, \eqref{flank} holds for quasi-continuity `by definition'. 

\smallskip

Secondly, for contra-continuity, we directly establish \eqref{flank}.
Hence, assume $(\exists x\in [0,1])(f(x)>q)$ and consider the open set $[ -\infty, q )$.  By assumption $f^{-1}([-\infty, q))=\{ x\in [0,1] : f(x)<q \}$ is closed and hence the complement $\{  x\in [0,1] :f(x)\geq q \}$ is open.  
Since the latter is non-empty by assumption, $(\exists r\in \Q\cap [0,1])(f(r)\geq q )$ follows by the definition of open set.  
\end{proof}
Finally, we show that $(\exists^{2})$ is not really needed for the results in this section. 
By \cite{kohlenbach2}*{Theorem 3.15}, $\RCAo+\FF$ is at the level of the Big Five system $\WKL_{0}$. 
Details on Tait's \emph{fan functional} as in $\FF$, may be found in \cite{nota}.
\begin{cor}\label{taitdied}
In Theorem \ref{TWC}, the axiom $(\exists^{2})$ can be replaced by: 
\be\tag{\textsf{FF}}
(\exists \Omega^{3})(\forall Y^{2}\in C(2^{\N}))( \forall h, g\in 2^{\N})(\overline{h}\Omega(Y)=\overline{g}\Omega(Y)\di Y(h)=Y(g) ),
\ee
where `$Y^{2}\in C(2^{\N})$' means that $Y^{2}$ is continuous on $2^{\N}$ in the usual sense. 
\end{cor}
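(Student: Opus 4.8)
The plan is to inspect the proof of Theorem \ref{TWC}, isolate the precise role played by $(\exists^{2})$, and argue that each such use is already available in $\RCAo+\FF$. Recall from \cite{kohlenbach2}*{Theorem 3.15} that $\RCAo+\FF$ has the same second-order consequences as the Big Five system $\WKL_{0}$; in particular it proves $\WKL$. Going through the proof of Theorem \ref{TWC}, $(\exists^{2})$ enters in essentially two ways: first, to justify and then \emph{exploit} the reduction \eqref{flank}, i.e.\ the equivalence $(\exists x\in[0,1])(f(x)>q)\asa(\exists r\in\Q\cap[0,1])(f(r)>q)$ (together with the attendant emptiness tests for sets like $f^{-1}(B(x_{0},\tfrac{1}{2^{k}}))$, where a rational member witnesses non-emptiness); and secondly, to run the interval-halving procedure that extracts $\sup f\in\overline{\R}$ from these tests.

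The first point is that the \emph{mathematical content} of \eqref{flank}, for $\alpha$-, $s$-, $\mathcal{A}$-, $\mathcal{AB}$-, quasi-, contra-, and SR-continuous $f$, is a theorem whose proof merely unfolds the relevant weak-continuity definitions from Definition \ref{WC}: a non-empty semi-open set, $\alpha$-set, or regular closed set has non-empty interior and hence contains a rational, and the case of contra-continuity is handled by the complementation argument already in the proof of Theorem \ref{TWC}. None of this uses arithmetical comprehension. Hence the only genuine obligation is to re-derive, inside $\RCAo+\FF$, the ability to \emph{decide} the right-hand side of \eqref{flank} for the dyadic rationals $q$ that arise during the search; granted this, the interval-halving procedure produces the Cauchy approximation to $\sup f$ (or detects the value $+\infty$) by ordinary $\RCAo$-reasoning.

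For this last obligation --- the heart of the matter --- the plan is to replace the blunt appeal to $(\exists^{2})$ by an application of $\FF$. Concretely, one would package the data ``$f$ evaluated along a fixed enumeration of $\Q\cap[0,1]$, compared against $q$'' as a type-two functional $Y^{2}:2^{\N}\di\N$, use the weak-continuity hypothesis --- via the semi-open / $\alpha$-set / regular-closed structure of the relevant inverse images --- to verify that on the inputs that matter $Y^{2}$ is continuous on $2^{\N}$ in the sense of the corollary, and then feed it to $\FF$ to obtain a finite modulus $\Omega(Y^{2})$. That modulus converts the unbounded search ``$(\exists r\in\Q\cap[0,1])(f(r)>q)$'' into a bounded, hence decidable, one. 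Making this packaging precise, and in particular checking the continuity-on-$2^{\N}$ requirement of $\FF$, is to be carried out with the techniques surrounding Tait's fan functional developed in \cite{nota}.

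I expect this step to be the main obstacle. Since $\RCAo+\FF$ does \emph{not} prove $(\exists^{2})$ --- it is only at the level of $\WKL_{0}$ --- one cannot simply quote $(\exists^{2})$ to settle the $\Sigma_{1}^{0}$-in-$f$ statement on the right-hand side of \eqref{flank}; the argument must genuinely use that $f$ lies in one of the tame classes of Definition \ref{WC}, so that the search for a rational witness is \emph{uniformly} bounded via $\FF$. Getting the continuity-on-$2^{\N}$ hypothesis of $\FF$ to mesh with the semi-open (etc.)\ structure of $f^{-1}(G)$, rather than with honest continuity of $f$, is exactly where the care is required.
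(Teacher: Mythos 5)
There is a genuine gap at exactly the step you yourself flag as the heart of the matter. Your plan is to package the values of $f$ along an enumeration of $\Q\cap[0,1]$ into a functional $Y^{2}$ on $2^{\N}$, verify continuity of $Y^{2}$ using the semi-open/$\alpha$-set structure of the inverse images, and use the modulus $\Omega(Y)$ to turn the search for a rational witness into a bounded, decidable one. This fails for two reasons. First, weak continuity of $f$ in the sense of Definition \ref{WC} places no continuity constraint on the sequence $n\mapsto f(r_{n})$ of values at rationals, so any functional on $2^{\N}$ coding the unbounded search for an index $n$ with $f(r_{n})>q$ will in general be discontinuous (e.g.\ at the all-zero sequence); the hypothesis of $\FF$ is simply not met. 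Second, and more fundamentally, even for honestly continuous $f$ the right-hand side of \eqref{flank} is a genuine $\Sigma^{0}_{1}$ assertion (the comparison $f(r)>q$ is itself $\Sigma^{0}_{1}$), and a uniform procedure deciding such statements is exactly $(\exists^{2})$; since $\RCAo+\FF$ sits at the level of $\WKL_{0}$, no such procedure is available. The correct goal is not to \emph{decide} the right-hand side of \eqref{flank} but to \emph{compute} the supremum as a real number via successive approximations.

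The paper's proof takes a different route, which you should compare with your own. It invokes the classical dichotomy $(\exists^{2})\vee\neg(\exists^{2})$. In the first case, the proof of Theorem \ref{TWC} applies verbatim. In the second case, by \cite{kohlenbach2}*{Prop.\ 3.7 and 3.12} \emph{every} $\R\di\R$-function is continuous, so the weak-continuity hypotheses become vacuous and it suffices to produce the supremum functional for continuous $f:[0,1]\di\R$: one codes the $k$-th approximation of $f$ as a continuous functional on Cantor space, applies $\FF$ to obtain a modulus of uniform continuity $N_{k}$, and takes the maximum of $f$ over the finitely many dyadic points of depth $N_{k}$, thereby producing a Cauchy approximation to $\sup f$ rather than deciding any existential statement. (One also needs that $\FF$ yields $\WKL$, via uniform continuity on Cantor space, so that the supremum of a continuous function exists in the first place.) Your proposal misses this excluded-middle case split, which is what makes the weak-continuity structure irrelevant precisely when $(\exists^{2})$ is unavailable.
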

\begin{proof}
First of all, $\FF$ (trivially) implies that all continuous functions on Cantor space are uniformly continuous, which yields weak K\"onig's lemma by \cite{dagsamXIV}*{\S2}.
Based on the latter, the supremum of continuous functions therefore \emph{exists}, and what remains to show is that the associated supremum functional can (somehow) be defined using $\FF$. 

\smallskip

Secondly, to establish the corollary, we use the law of excluded middle as in $(\exists^{2})\vee \neg(\exists^{2})$.  In case $(\exists^{2})$ holds, use the proof of Theorem \ref{TWC}.
In case $\neg(\exists^{2})$, all $\R\di \R$- and $\N^{\N}\di \N$-functions are continuous by \cite{kohlenbach2}*{Prop.\ 3.7 and 3.12}.
Hence, it suffices to define the supremum functional for continuous functions on $[0,1]$. 
Fix continuous $f:[0,1]\di \R$ and consider $Y:(\N^{\N}\times \N)\di \N$ defined as follows $Y(g, k):=[f(\r(\tilde{g}))](k)$, where $\tilde{g}(n)$ is $1$ if $f(n)>0$ and $0$ otherwise, $\r(g):= \sum_{n=0}^{+\infty}\frac{{g}(n)}{2^{n+1}}$, and $[x](k)$ is the $k$-th approximation of the real $x$, coded as a natural number.  By assumption, $\lambda g.Y(g, k)$ is continuous on $2^{\N}$ and use $\FF$ to define $N_{k}:= \Omega(\lambda g.Y(g, k))$ and $M_{k}:= \max_{i\leq {2^{N_{k}}}}f(\r(\sigma_{i}*00) )$ where $(\sigma_{i})_{i\leq k}$ lists all ($2^{k}$ many) finite binary sequence of length $k$. 
Clearly, $M_{k}$ is the $k$-th approximation of $\sup_{x\in [0,1]}f(x)$ and we are done. 
\end{proof}
In conclusion, the supremum functional for a number of weak continuity notions, namely as in Def.\ \ref{WC}, is rather \emph{tame} by Theorem \ref{TWC}.  
By contrast, the supremum functional for weak continuity notions as in Def.\ \ref{KY} is rather \emph{wild} by Theorem \ref{WC}, despite all these notions existing side by side in the literature.  
This establishes the Janus-face nature of weak continuity notions. 

\subsection{Feferman's projection principle and beyond}\label{lef}
We show that the \emph{third-order} supremum principle for certain wild weakly continuous functions implies or is equivalent to strong axioms, including Feferman's projection principle and even full second-order arithmetic.  We recall our different formulations of the supremum \textsf{(sup1)}-\textsf{(sup3)} from Section \ref{klintro}.

\smallskip

In more detail, the equivalences in Section \ref{klef} involve fourth-order objects, like the functional $E$ in $(\exists^{3})$ and the supremum functional $\Xi$.  
For various reasons, one might prefer third-order theorems, and the equivalences in Section \ref{klef} for $(\exists^{3})$ turn out to have third-order counterparts as follows.
\begin{itemize}
\item The supremum principle using \textsf{(sup1)} for certain wild weakly continuous functions on $[0,1]$ is equivalent to $\BOOT$ (Section \ref{lef1}).
\item The supremum principle using \textsf{(sup3)} for certain wild weakly continuous functions on $[0,1]\times [0,1]$ implies $\SIXK$ (Section \ref{beyo}).
\end{itemize}
 We recall that $\BOOT$ is our version of Feferman's projection axiom \textsf{Proj1} from \cite{littlefef}. 
 The former (and the latter) is `highly explosive' following Theorem \ref{timo}, i.e.\ when combined with certain comprehension axioms, one obtains much stronger comprehension axioms.
 
%

\subsubsection{Equivalences for Feferman's projection principle}\label{lef1}
We show that Darboux's supremum principle based on \textsf{(sup1)} for certain weakly continuous functions on $[0,1]$ is equivalent to $\BOOT$; we now introduce the latter.

\smallskip

First of all, the late Sol Feferman was a leading Stanford logician with a life-long interest in foundational matters (\cite{fefermanlight, fefermanmain}), esp.\ so-called predicativist mathematics following Russell and Weyl (\cite{weyldas}). 
As part of this foundational research, Feferman introduced the `projection principle' \textsf{Proj}$_{1}$ in \cite{littlefef}, a third-order version of $(\exists^{3})$ that is impredicative and highly explosive\footnote{Feferman's predicative system $\textup{VFT}+(\mu^{2})$ from \cite{littlefef} is similar to $\ACAo$.  The former plus  \textsf{Proj1} implies $\Z_{2}$, as noted in \cite{littlefef}*{\S5}.}.
In this section, we obtain equivalences between our version of $\textsf{Proj1}$ and the supremum principle (Princ.~\ref{SIP}) for weak continuity classes from Def.\ \ref{KY}.  
In particular, we study the following principle, which is Feferman's principle \textsf{Proj}$_{1}$ formulated in the language of higher-order RM.
\begin{princ}[$\BOOT$] For $Y^{2}$, there is $X\subset \N$ such that 
\be\label{EZ}
(\forall n\in \N)\big[n\in X\asa (\exists f\in \N^{\N})(Y(f, n)=0)  \big].
\ee
\end{princ}
The name of this principle derives from the verb `to bootstrap'.  Indeed, $\BOOT$ is third-order and weaker than $(\exists^{3})$, but Theorem \ref{timo} implies that this principle is \emph{explosive}, yielding as it does $\Pi_{k+1}^{1}$-comprehension when combined with a comprehension functional for $\Pi_{k}^{1}$-formula.  In other words, $\BOOT$ bootstraps itself along the second-order comprehension hierarchy so central to mathematical logic (see \cite{sigohi}).  We have obtained equivalences between $\BOOT$ and convergence theorems for nets in \cites{samph, samhabil}, but the supremum principle (Princ.\ \ref{SIP}) is much more natural.         

\smallskip

Secondly, by Theorem \ref{frank}, $\BOOT$ is equivalent to the usual supremum principle for the weak continuity classes from Def.\ \ref{KY}.
\begin{princ}[Supremum principle for $\Gamma$]\label{SIP}
For any $f:\R\di \R$ in the function class $\Gamma$ and $p, q\in \Q$, $\sup_{x\in [p, q]}f(x)$ exists\footnote{To be absolutely clear, the supremum principle states the existence of a sequence $(x_{n,m})_{n,m\in \N}$ in $\R\cup\{+\infty, -\infty\}$ such that $x_{n,m}=\sup_{x\in [q_{n}, q_{m}]}f(x)$ where $(q_{n})_{n\in \N} $ enumerates $\Q$.}.  
\end{princ}
We stress that while the first equivalence is perhaps not as surprising, the equivalences for supremum principles involving weak continuity notions are important, in our opinion. 
\begin{thm}[$\ACAo$]\label{frank} The following are equivalent. 
\begin{itemize}
\item $\BOOT$, 
\item the supremum principle for all $\R\di \R$-functions.
\item the supremum principle for graph continuous functions on $\R$.
\item the supremum principle for graph continuous $[0,1]\di [0,1]$-functions.
\item the supremum principle for the weak continuity notions in Theorem~\ref{lonk}. 
\end{itemize}
\end{thm}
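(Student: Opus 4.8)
The plan is to prove Theorem~\ref{frank} by a cycle of implications, exploiting the fact (already used in the proof of Theorem~\ref{lonk}) that over $\ACAo$ one has the functional $\exists^{2}$ available, so all the relevant definitions and case-distinctions from Definition~\ref{KY} make sense. The natural cycle is: $\BOOT \Rightarrow$ (supremum principle for all $\R\di\R$-functions) $\Rightarrow$ (supremum principle for graph continuous functions on $\R$) $\Rightarrow$ (supremum principle for graph continuous $[0,1]\di[0,1]$-functions) $\Rightarrow \BOOT$, together with the observation that the weak continuity notions from Theorem~\ref{lonk} all subsume the functions $g_{f}$ from \eqref{kruk}, so the supremum principle for any of them also implies $\BOOT$ (and is implied by the supremum principle for all functions). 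I would organise the argument so that the ``hard'' direction, namely producing $\BOOT$ from a supremum principle for a restricted class, is done once for the smallest class (graph continuous $[0,1]\di[0,1]$-functions) and then noted to transfer.

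For $\BOOT \Rightarrow$ supremum principle for all $\R\di\R$-functions: given $f:\R\di\R$ and rationals $p,q$, I would use $\BOOT$ to decide, for each rational $r$, whether $(\exists x\in[p,q])(f(x)>r)$; more precisely, feed into \eqref{EZ} the functional $Y(h,n)$ that reads $h$ as coding a real $x\in[p,q]$ (via $\exists^{2}$ and the binary-representation trick from the proof of Theorem~\ref{lonk}) and the index $n$ as coding a rational $r_{n}$, and outputs $0$ iff $f(x)>r_{n}$. The resulting set $X$ lets us decide the predicate $r\in\Q \mapsto (\exists x\in[p,q])(f(x)>r)$, and then the usual interval-halving / Dedekind-cut construction yields $\sup_{x\in[p,q]}f(x)$ as an extended real, uniformly in $p,q$, giving the required double sequence. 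The step from ``all functions'' to ``graph continuous functions on $\R$'' to ``graph continuous $[0,1]\di[0,1]$-functions'' is trivial restriction of scope in one direction.

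For the reversal, I would take an arbitrary $Y^{2}$ and, mimicking \eqref{kruk}, build a single function $g_{Y}:[0,1]\di[0,1]$ (or $\R\di\R$) that is graph continuous and encodes $\BOOT$ for $Y$. The idea: partition $[0,1]$ (minus the rationals) into countably many dense pieces $P_{n}$, one for each $n\in\N$, and set $g_{Y}(x)=1$ when $x\in P_{n}$ and $(\exists f\in\N^{\N})(Y(f,n)=0)$, and $g_{Y}(x)=0$ otherwise (in particular $g_{Y}(q)=0$ for $q\in\Q$, so $\R\times\{0\}\subset\overline{G(g_{Y})}$, exactly as in the graph-continuity bullet of the proof of Theorem~\ref{lonk}); the zero function then witnesses graph continuity. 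One must be slightly careful to get values in $[0,1]$ and to keep the pieces dense so that the closure-of-graph argument still works; using a fixed bijection $\N\to\Q\cap(0,1)$ and letting $P_{n}$ be a dense translate is the mechanism. Now $\sup_{x\in[p_{n},q_{n}]}g_{Y}(x)$ over a small interval $[p_{n},q_{n}]$ meeting $P_{n}$ equals $1$ iff $(\exists f)(Y(f,n)=0)$, so reading off whether each such supremum is $1$ or $0$ produces the set $X$ required by \eqref{EZ}. The main obstacle is precisely this encoding: arranging, inside $\RCAo+\exists^{2}$, a uniformly definable family of dense sets $(P_{n})_{n\in\N}$ with the required disjointness and density so that (i) $g_{Y}$ is genuinely graph continuous with the zero function as witness, and (ii) the supremum over a well-chosen rational interval recovers exactly the $n$-th bit of $\BOOT$. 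Finally, for the last bullet of the theorem I would observe that each weak continuity notion appearing in Theorem~\ref{lonk} already contains the functions $g_{f}$ from \eqref{kruk} (this is shown in that proof), and more to the point contains the $g_{Y}$ just constructed by the same case-distinctions, so the supremum principle for any of those classes lies between ``supremum principle for all functions'' and $\BOOT$, closing the equivalence.
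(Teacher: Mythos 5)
Your forward direction ($\BOOT$ implies the supremum principle for all $\R\di\R$-functions, via deciding $(\exists x\in[p,q])(f(x)>r)$ for rational $p,q,r$ and then interval-halving) is exactly the paper's argument, and the organisation of the cycle is the same. The problem is the reversal. You propose to define $g_{Y}(x)=1$ when $x\in P_{n}$ \emph{and} $(\exists f\in\N^{\N})(Y(f,n)=0)$. That clause quantifies over all of Baire space, i.e.\ it is precisely the formula whose decidability constitutes $\BOOT$ as in \eqref{EZ}. In $\ACAo$ you cannot prove that a function with this defining condition exists: producing $g_{Y}$ would require already having the set $X=\{n:(\exists f)(Y(f,n)=0)\}$ in hand, so your reversal is circular. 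Note that this is not how \eqref{kruk} works: there the definition of $g_{f}(x)$ involves only a quantifier over $\Q$, namely $(\exists q\in \Q)(x+q\in[0,1]\wedge f(x+q)=0)$, which is arithmetical in $x$ and hence decidable by $\exists^{2}$; the real argument $x$ itself carries the putative witness, and the unbounded existential quantifier over $[0,1]$ is only \emph{recovered afterwards} by taking the supremum over $x$.

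The paper's reversal for Theorem~\ref{frank} does exactly this with the parameter $n$ spread over the real line: given $f:(\R\times\N)\di\N$ (the normal form of $Y$ after coding reals by binary expansions via $\exists^{2}$), one sets $g_{f}(x)=1$ iff $x\in(n+1,n+2)$ and $(\exists q\in\Q)(x+q\in[0,1]\wedge f(x+q,n)=0)$, and $0$ otherwise. This is an arithmetical case distinction, so $g_{f}$ exists in $\ACAo$; it vanishes on $\Q$, so the zero function witnesses graph continuity (and the other weak continuity notions follow by the case distinctions from the proof of Theorem~\ref{lonk}); and $\sup_{x\in[n+1,n+2]}g_{f}(x)>0$ holds iff $(\exists y\in[0,1])(f(y,n)=0)$, which yields \eqref{EZ}. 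Your idea of dense pieces $P_{n}$ inside $[0,1]$ is the right instinct for the $[0,1]\di[0,1]$ bullet (one compresses the intervals $(n+1,n+2)$ into rational subintervals of $[0,1]$), but the membership condition for the value $1$ must remain arithmetical in $x$ with $x$ encoding the witness --- not a naked existential quantifier over $\N^{\N}$.
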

\begin{proof}
First of all, we repeat (the well-known fact) that $(\exists^{2})$ can convert a real $x\in [0,1]$ to a unique\footnote{In case of two binary representations, chose the one with a tail of zeros.} binary representation.  
Similarly, an element of Baire space $f\in \N^{\N}$ can be identified with its graph as a subset of $\N\times \N$, which can be represented as an element of Cantor space.  
Over $\ACAo$, $\BOOT$ is thus equivalent to the statement that for any $f:(\R\di \N)\di \N$, there is $X\subset \N$ such that for all $n\in \N$:
\be\label{alti2}
(\exists x\in [0,1])(f(x,n)=0)\asa n\in X.
\ee
Secondly, to show that $\ACAo+\BOOT$ proves the supremum principle for any function $f:\R\di \R$, $\BOOT$ in the form \eqref{alti2} yields $X\subset \Q^{3}$ with
\[ 
(\forall p,q, r\in \Q)\big[  (\exists x\in [p,  q])(f(x)>r)\asa (p, q, r)\in X ].
\]
Now use $\exists^{2}$ and the usual interval-halving method to find the relevant supremum. 

\smallskip

Secondly, let $f:(\R\times \N)\di \N$ be given and note that in light of \eqref{alti2}, we may assume $(\forall q\in \Q, n\in \N)(f(q, n)\ne 0)$.
Indeed, one can use $\exists^{2}$ to decide the latter formula and redefine $f$ if necessary.  Now define $g_{f}:\R\di \R$ as follows:
\[
g_{f}(x):=
\begin{cases}
1 & x\in (n+1,n+2 )\wedge (\exists q\in \Q)( x+q\in [0,1]\wedge f(x+q, n)=0   )\\
0   &  \textup{otherwise}
\end{cases}.
\]
Note that $g_{f}(q)=0$ for $q\in\Q$, i.e.\ $\R\times \{0\}\subset \overline{G(g_{Y})} $.
Hence, the `zero-everywhere' function $g_{0}$ is such that $G(g_{0})\subset \overline{G(g_{f})}$, i.e.\ $g_{f}$ is graph continuous. 
By definition, we have for all $n \in \N $:
\be\label{heng}\textstyle
(\exists x\in [0,1])(f(x, n)=0)\asa [0< \sup_{x\in [n,n+1]}g_{f}(x)], 
\ee
i.e.\ $\BOOT$ follows from the supremum principle for graph continuous functions.
As in the proof of Theorem \ref{lonk}, one verifies that $g_{f}$ also satisfies all the other weak continuity notions listed in Theorem \ref{lonk}. 
\end{proof}
One can obtain similar results for the existence of an \emph{oscillation function}, going back to Riemann and Hankel (\cites{riehabi,hankelwoot}), but the latter
is defined in terms of the supremum operator anyway.    More interestingly, one could generalise the supremum principle from intervals to arbitrary sets of reals. 

\smallskip

Thirdly, the evaluation of the Riemann integral is a central part of (undergraduate) calculus.  
Basic examples show that this integral can be finite, infinite, or undefined.  The exact case can vary with the end-points, as is clear in light of the following evaluation of the Riemann integral of $\frac1x$:
\[\textstyle
\int_{a}^{b} \frac{dx}{x} =
\begin{cases}
\ln (|b|)-\ln(|a|)  & 0<a< b \vee a<b<0 \\
\quad {\uparrow} & a< 0 <  b  \\
+\infty & 0=a <  b \\
-\infty & a<b=0
\end{cases},
\]
 where the arrow $\uparrow$ is a symbol representing `undefined', standard in computability theory and satisfying\footnote{To be absolutely clear, the formula `$x=y$' is $\Pi_{1}^{0}$ for $x, y\in \overline{\R}$ by Footnote \ref{triffo} (see \cites{simpson2, kohlenbach2} for equality on the reals) and we assume the same complexity for $x, y\in \overline{\R}\cup\{\uparrow\}$.\label{triffo2}} $x\ne {\uparrow}$ for $x\in \overline{\R}$ by fiat.  We now study the following principle where we use the `epsilon-delta' definition of Riemann integration. 
\begin{princ}[Evaluation principle for $\Gamma$]\label{EIP}
For any $f:\R\di \R$ in the function class $\Gamma$ and $p, q\in \Q$, the Riemann integral $\int_{p}^{q}f(t)dt$ can be evaluated\footnote{To be absolutely clear, the evaluation principle states the existence of a sequence $(x_{n,m})_{n,m\in \N}$ in $\R\cup \{+\infty, -\infty, \uparrow\}$ such that $x_{n,m}=\int_{q_{n}}^{q_{m}}f(t)dt$ where $(q_{n})_{n\in \N} $ enumerates $\Q$.}. 
\end{princ}
Apparently, evaluating the Riemann integral is hard, as follows. 
\begin{thm}[$\ACAo$]\label{frank2} The following are equivalent. 
\begin{itemize}
\item $\BOOT$, 
\item the evaluation principle for graph continuous functions on $\R$,
\item the evaluation principle for graph continuous $[0,1]\di [0,1]$-functions,
\item the evaluation principle for the weak continuity notions in Theorem~\ref{WC}. 
\end{itemize}
\end{thm}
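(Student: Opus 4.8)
The plan is to mirror the proof of Theorem~\ref{frank}, inserting one extra ingredient: the classical fact that a Dirichlet-type function is not Riemann integrable. Recall from that proof that, over $\ACAo$, $\BOOT$ is equivalent to the scheme~\eqref{alti2}, i.e.\ that for every $f:(\R\di\N)\di\N$ there is $X\subset\N$ with $(\exists x\in[0,1])(f(x,n)=0)\asa n\in X$ for all $n$. For the implication from $\BOOT$ to the evaluation principles, I would first invoke Theorem~\ref{frank}: $\ACAo+\BOOT$ proves the supremum principle for \emph{all} $\R\di\R$-functions, so, applying it also to $-f$, one has for any $f:\R\di\R$ and rationals $p<q$ the full sequences of suprema and infima of $f$ over rational subintervals of $[p,q]$. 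From this data one forms, for each $n$, the upper and lower Darboux sums $U_{n}, L_{n}$ of $f$ over the uniform rational partition of $[p,q]$ into $2^{n}$ pieces; these are arithmetically defined $\overline{\R}$-valued finite sums of the supremum/infimum data, so $\exists^{2}$ evaluates the monotone limits $\overline{I}:=\inf_{n}U_{n}$ and $\underline{I}:=\sup_{n}L_{n}$ in $\overline{\R}$. Since the uniform rational partitions are cofinal under refinement, $\overline{I},\underline{I}$ are the upper and lower Darboux integrals of $f$ on $[p,q]$, and comparing them (using the supremum data also to detect and handle any points at which $f$ is unbounded, in accordance with the improper-integral conventions behind Princ.~\ref{EIP}) lets $\exists^{2}$ output $\int_{p}^{q}f(t)\,dt$ as an element of $\overline{\R}$ when it is defined, and the symbol $\uparrow$ otherwise. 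This gives the evaluation principle for all $\R\di\R$-functions, hence \emph{a fortiori} for graph continuous functions on $\R$, for graph continuous $[0,1]\di[0,1]$-functions, and for the weak continuity notions of Theorem~\ref{lonk}.

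For the converse implications I would reuse the coding function from the proof of Theorem~\ref{frank}: given $f:(\R\times\N)\di\N$, use $\exists^{2}$ to arrange $f(q,n)\ne0$ for all $q\in\Q$ and $n\in\N$, and set $g_{f}(x):=1$ when $x\in(n+1,n+2)$ and $(\exists q\in\Q)(x+q\in[0,1]\wedge f(x+q,n)=0)$, and $g_{f}(x):=0$ otherwise; for the $[0,1]\di[0,1]$-variant one replaces the intervals $(n+1,n+2)$ by a fixed sequence of pairwise disjoint open subintervals of $(0,1)$ with rational endpoints. As in Theorems~\ref{lonk} and~\ref{frank}, $g_{f}$ vanishes on $\Q$, so the `zero everywhere' function witnesses its graph continuity, and the same case distinction as in the proof of Theorem~\ref{lonk} shows that $g_{f}$ satisfies every weak continuity notion listed there. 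The new point is that, for fixed $n$, the restriction of $g_{f}$ to $[n+1,n+2]$ is identically $0$ when $(\forall x\in[0,1])(f(x,n)\ne0)$, while if $f(x_{0},n)=0$ for some (necessarily irrational) $x_{0}\in[0,1]$, then $g_{f}$ equals $1$ on the dense coset $(x_{0}+\Q)\cap(n+1,n+2)$ and $0$ on the dense set of rationals there, hence $g_{f}$ is not Riemann integrable on $[n+1,n+2]$, exactly as for Dirichlet's function $\mathbb{1}_{\Q}$. Thus $(\exists x\in[0,1])(f(x,n)=0)\asa \int_{n+1}^{n+2}g_{f}(t)\,dt={\uparrow}$, and since `$x={\uparrow}$' is $\Pi_{1}^{0}$ by fiat (Footnote~\ref{triffo2}), $\exists^{2}$ reads off from the sequence provided by the evaluation principle the set $X=\{n\in\N:\int_{n+1}^{n+2}g_{f}(t)\,dt={\uparrow}\}$, which witnesses~\eqref{alti2} for $f$. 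The $[0,1]\di[0,1]$-case and the case of the Theorem~\ref{lonk} notions are identical, and together with the trivial inclusions among the function classes this closes the circle of equivalences.

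The main obstacle is the book-keeping in the first implication rather than any conceptual difficulty: one must verify that the upper and lower Darboux integrals really are captured by the uniform rational partitions (a routine approximation estimate, the error from moving a partition point to a nearby rational being bounded by the local bound on $|f|$ times the displacement), and that each degenerate outcome---$f$ unbounded on $[p,q]$, $\overline{I}=\underline{I}=\pm\infty$, or $\overline{I}\ne\underline{I}$---is sorted to the correct value among $\R\cup\{+\infty,-\infty,\uparrow\}$ in accordance with the conventions underlying Princ.~\ref{EIP}. Everything else is either a direct appeal to Theorem~\ref{frank} or a re-run of the combinatorial verifications already carried out for Theorems~\ref{lonk} and~\ref{frank}.
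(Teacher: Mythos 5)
Your reverse direction coincides with the paper's: the same coding function $g_{f}$ from the proof of Theorem~\ref{frank}, the observation that on each $[n+1,n+2]$ it is either identically zero or Dirichlet-like and hence not Riemann integrable, and the resulting equivalence between $(\exists x\in[0,1])(f(x,n)=0)$ and $\int g_{f}={\uparrow}$ (the paper states the contrapositive, via $\int_{n}^{n+1}g_{f}(t)dt=0$, in \eqref{hengk}). Your forward direction, however, is a genuinely different route. The paper does not pass through Theorem~\ref{frank} at all: it takes the epsilon-delta (Cauchy-criterion) definition of Riemann integrability as in \eqref{fing}, observes that its matrix $A(k,N)$ quantifies over pairs of partitions, i.e.\ over reals, so that \eqref{alti2} applies directly; one application of $\BOOT$ then yields $X\subset\N^{2}$ with $(k,N)\in X\asa A(k,N)$, and integrability on $[p,q]$ becomes the $\exists^{2}$-decidable statement $(\forall k\in\N)(\exists N\in\N)((k,N)\in X)$. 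You instead invoke Theorem~\ref{frank} to get the supremum principle for arbitrary $\R\di\R$-functions, apply it to $f$ and $-f$, and reconstruct the integral as the common value of the upper and lower Darboux integrals built from uniform rational partitions. Both arguments go through over $\ACAo$; yours has the merit of making the actual \emph{production of the value} $\int_{p}^{q}f(t)dt$ explicit (the paper only decides integrability and is silent on how the value, or $\pm\infty$, is then extracted), at the cost of the Darboux--Riemann equivalence and the cofinality estimate you correctly flag as routine; the paper's is a single, more uniform application of $\BOOT$ that avoids that bookkeeping. In both treatments the conventions for unbounded $f$ (when the answer is $\pm\infty$ rather than ${\uparrow}$, cf.\ the $\int_{a}^{b}\frac{dx}{x}$ table) are left under-specified, so your explicit hedging there is appropriate rather than a gap. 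Finally, the fourth bullet's reference to ``Theorem~\ref{WC}'' is evidently a slip for the wild notions of Theorem~\ref{lonk}/Definition~\ref{KY}; that is how you read it, and it is the only reading the construction of $g_{f}$ supports.
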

\begin{proof}
To obtain $\BOOT$ from the other principles, Dirichlet's function $\mathbb{1_{Q}}$ is readily shown to satisfy $\int_{0}^{1}\mathbb{1_{Q}}(t)dt={\uparrow}$ in $\ACAo$.
In exactly the same way, in case $f:\R\di \R$ is not identically $0$ on $[0,1]$, $g_{f}$ from \eqref{kruk} is not Riemann integrable.  
Thus, instead of \eqref{heng}, we have for all $n\in \N$ that:
\be\label{hengk}\textstyle
(\forall x\in [0,1])(f(x, n)\ne 0)\asa \int_{n}^{n+1}g_{f}(t)dt =0, 
\ee
which is as required for $\BOOT$, similar to the proof of Theorem \ref{frank}.

\smallskip

To derive the other principles from $\BOOT$, consider the usual definition of Riemann integrability on $[0,1]$:  
\be\label{fing}\textstyle
 (\forall k\in \N)(\exists N\in  \N)\underline{(\forall P, Q)( \|P\|, \|Q\|< \frac{1}{2^{N}} \di |S(f, P)-S(f, Q)|\leq \frac{1}{2^{k}})},
\ee
where $P=(0, x_{0},t_{0}, x_{1}, t_{1}, \dots , x_{k}, t_{k}, 1)$ is a partition of $[0,1]$, the real $S(f, P):=\sum_{i}f(t_{i})(x_{i+1}-x_{i})$ is the associated Riemann sum, and $\|P\|=\max_{i}(x_{i+1}-x_{i})$ is the mesh.  
Let $A(k, N)$ be the underlined formula in \eqref{fing} and note that \eqref{alti2} applies, modulo the removal of arithmetical quantifiers using $(\exists^{2})$.  
By $\BOOT$, there is $X\subset \N^{2}$ such that $(k,N)\in X\asa A(k,N)$.  In this way, Riemann integrability for $f$ is equivalent to $(\forall k\in \N)(\exists N\in \N)[(k, N)\in X]$, which is decidable given $(\exists^{2})$. 
Replacing $[0,1]$ by $[p, q]$ for $p, q\in \Q$, one obtains the evaluation principle.  
\end{proof}
Finally, we show that $\BOOT$ is explosive when combined with comprehension functionals as in Theorem \ref{timo}.  
In the latter, $\SIXk$ is $\RCA_{0}$ plus comprehension for $\Pi_{k}^{1}$-formulas, i.e.\ for $\varphi(n)$ as in the latter, the set $\{n\in \N: \varphi(n)\}$ exists.  
On a technical note, the \emph{Kleene normal form theorem} is provable in $\ACA_{0}$ (\cite{simpson2}*{V.5.4}) and implies that $\Pi_{k}^{1}$-formulas can be represented in the so-called \emph{Kleene normal form}.  
Using this normal form, $(\SS_{k}^{2})$ expresses the existence of a functional $\SS_{k}^{2}$ which decides the truth of $\Pi_{k}^{1}$-formulas given in their Kleene normal form. 
We define $\SIXK$ as $\RCAo+(\SS_{k}^{2})$ and have the following theorem.  A more detailed discussion of $(\SS_{k}^{2})$ is in Remark \ref{kol} at the end of this section. 
\begin{thm}\label{timo}~
\begin{itemize}
\item The system $\ACAo+\BOOT$ proves $\FIVE$.
\item The system $\SIXK+\BOOT$ proves $\SIXko$ for $k\geq 1$. 
\end{itemize}
\end{thm}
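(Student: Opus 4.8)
The plan is to get both parts from one fact: $\BOOT$ turns the predicate ``$(\exists f\in\N^{\N})(Y(f,n)=0)$'' into a subset of $\N$, and when $Y$ is definable from the data already present in the ambient system this predicate is exactly a $\Sigma_{1}^{1}$-in-that-data condition on $n$; relativising the data then climbs the analytical hierarchy, with the Kleene normal form theorem aligning quantifier complexity with the levels $\Pi_{m}^{1}$. For part (i), work in $\ACAo=\RCAo+(\exists^{2})$ and let $\varphi(n)$ be $\Pi_{1}^{1}$, possibly with number and set parameters. Then $\neg\varphi(n)$ is $\Sigma_{1}^{1}$, so by the Kleene normal form theorem (available already in $\ACA_{0}$, see \cite{simpson2}*{V.5.4}) it is equivalent to $(\exists f\in\N^{\N})\theta(f,n)$ with $\theta$ a $\Pi_{1}^{0}$ formula in the parameters; since $(\exists^{2})$ decides such $\theta$, the functional $Y$ with $Y(f,n)=0$ exactly when $\theta(f,n)$ holds is definable. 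Applying $\BOOT$ to $Y$ yields $X\subset\N$ with $n\in X\asa\neg\varphi(n)$, hence $\{n:\varphi(n)\}=\N\setminus X$ exists: this is $\Pi_{1}^{1}$-comprehension, and $\FIVE$ follows. (Carrying this out uniformly in a code for $\varphi$ even yields the Suslin functional, but $\FIVE$ is all that is claimed.)

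For part (ii), I would relativise the same argument to $\SS_{k}^{2}$, working in $\SIXK+\BOOT$; note that $\SIXK$ proves $(\exists^{2})$, hence $\ACA_{0}$, for $k\geq1$, since every $\Sigma_{1}^{0}$-formula is trivially $\Pi_{k}^{1}$. Let $\varphi(n)$ be $\Pi_{k+1}^{1}$ with parameters; then $\neg\varphi(n)$ is $\Sigma_{k+1}^{1}$, and by the Kleene normal form theorem it is equivalent to $(\exists f\in\N^{\N})\chi(f,n)$ where $\chi$ is a $\Pi_{k}^{1}$ formula whose parameters are $f$, $n$, and those of $\varphi$. By the uniformity built into the normal form theorem, $\chi(f,n)$ is, uniformly in $f$ and $n$, the value of the universal $\Pi_{k}^{1}$ predicate at a real obtained primitive-recursively from $f$, $n$ and those parameters, so $\SS_{k}^{2}$ decides $\chi(f,n)$, and the functional $Y$ with $Y(f,n)=0$ exactly when $\chi(f,n)$ holds is definable. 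Applying $\BOOT$ to $Y$ gives $X\subset\N$ with $n\in X\asa\neg\varphi(n)$, and complementation yields $\Pi_{k+1}^{1}$-comprehension, that is, $\SIXko$. Set parameters are handled exactly as in part (i), and the base theory $\RCA_{0}$ of $\SIXko$ is of course available throughout.

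The one genuinely delicate step is the interface between $\BOOT$ and $\SS_{k}^{2}$. One must check that stripping the outermost universal function quantifier from a $\Pi_{k+1}^{1}$ formula leaves a $\Sigma_{k}^{1}$ formula whose negation $\SS_{k}^{2}$ evaluates \emph{uniformly in the stripped variable treated as a further parameter}, and dually that the single existential function quantifier supplied by $\BOOT$ is precisely the outermost quantifier of the $\Sigma_{k+1}^{1}$ formula in normal form. This is the standard uniformity content of the Kleene normal form theorem, so it is bookkeeping rather than a real obstacle; once it is in place, $(\exists^{2})$ absorbs the remaining arithmetical quantifiers and part (ii) is simply part (i) shifted up one level of the analytical hierarchy.
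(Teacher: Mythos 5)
Your proposal is correct and follows essentially the same route as the paper: put the ($\Sigma^{1}_{1}$, resp.\ $\Sigma_{k+1}^{1}$) formula into Kleene normal form, use $(\exists^{2})$ (resp.\ $\SS_{k}^{2}$) to render the matrix decidable as a functional $Y$, apply $\BOOT$ to obtain the comprehension set, and close under complementation. The paper's proof of the second item is literally ``mutatis mutandis''; your explicit treatment of the uniformity of the normal form and of the complementation step merely spells out what the paper leaves implicit.
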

\begin{proof}
For the first item, let $\varphi(n)$ be a $\Pi_{1}^{1}$-formula.  By the Kleene normal form theorem (\cite{simpson2}*{V.5.4}), there is $f\in \N^{\N}$ such that 
\[
(\forall n\in \N)\big[\varphi(n)\asa (\exists g\in \N^{\N})\underline{(\forall m\in \N)(f(\overline{g}m, n)=0)}\big]. 
\]
The underlined formula is decidable using $\exists^{2}$, i.e.\ there is $Y^{2}$ such that 
\[
(\forall n\in \N)\big[\varphi(n)\asa (\exists g\in \N^{\N})(Y(g, n)=0)\big], 
\]
and $\BOOT$ yields $X\subset \N$ such that $\varphi(n)\asa n\in X$ for all $n\in \N$. 
The proof of the second item proceeds in the same way \emph{mutatis mutandis}. 
\end{proof}
While the `distance' between $\SIXK$ and $\SIXko$ is not as immense as that between $(\exists^{2})$ and $(\exists^{3})$, the principle $\BOOT$ still enables a considerable leap in logical strength, when provided with the axiom $(\SS_{k}^{2})$. 

\smallskip

Finally, we discuss the axiom $(\SS_{k}^{2})$ in more detail as follows. 
\begin{rem}\label{kol}\rm
The functional $\SS^{2}$ in $(\SS^{2})$ is called \emph{the Suslin functional} (\cite{kohlenbach2}):
\be\tag{$\SS^{2}$}
(\exists\SS^{2}:\N^{\N}\di \N)(\forall f \in \N^{\N})\big[  (\exists g\in \N^{\N})(\forall n\in \N)(f(\overline{g}n)=0)\asa \SS(f)=0  \big].
\ee
The system $\FIVE^{\omega}\equiv \RCAo+(\SS^{2})$ proves the same $\Pi_{3}^{1}$-sentences as $\FIVE$ by \cite{yamayamaharehare}*{Theorem 2.2}.   
By definition, the Suslin functional $\SS^{2}$ can decide whether a $\Sigma_{1}^{1}$-formula as in the left-hand side of $(\SS^{2})$ is true or false.   We similarly define the functional $\SS_{k}^{2}$ which decides the truth or falsity of $\Sigma_{k}^{1}$-formulas; we also define 
the system $\SIXK$ as $\RCAo+(\SS_{k}^{2})$, where  $(\SS_{k}^{2})$ expresses that $\SS_{k}^{2}$ exists.  
We note that the Feferman-Sieg operators $\nu_{n}$ from \cite{boekskeopendoen}*{p.\ 129} are essentially $\SS_{n}^{2}$ strengthened to return a witness (if existant) to the $\Sigma_{n}^{1}$-formula at hand.  
\end{rem}

\subsubsection{Beyond the projection principle}\label{beyo}
By Theorem \ref{frank}, $\BOOT$ is equivalent to Darboux's supremum principle on the unit interval for the weak continuity classes from Def.\ \ref{KY}.
We now switch to the plane and assume the supremum is given as a function, namely as in $\textsf{(sup2)}$ from Section \ref{klintro}.
\begin{princ}[Supremum principle for $\Gamma^{2}$]\label{SSP}
For any $f:\R^{2}\di \R$ in the function class $\Gamma^{2}$ and $p, q\in \Q$, the function $\lambda x.\sup_{y\in [p, q]}f(x, y)$ exists\footnote{To be absolutely clear, Princ.\ \ref{SSP} states the existence of $\Phi:(\R\times \Q^{2})\di \R\cup\{+\infty, -\infty\}$ such that $\Phi(x, p, q)=\sup_{y\in [p,q]}f(x, y)$ for any $x\in \R$ and $p, q\in \Q$.}.  
\end{princ}
We shall also need the following `two dimensional' version of $\BOOT$.
\begin{princ}[$\BOOT_{2}$] For $Y^{2}$, there is $X\subset \N$ such that 
\be\label{EZER}
(\forall n\in \N)\big[n\in X\asa (\exists f\in \N^{\N})(\forall g\in \N^{\N})(Y(f,g, n)=0)  \big].
\ee
\end{princ}
Clearly, $\RCAo+\BOOT_{2}$ proves $\FIVE$ and $\ACAo+\BOOT_{2}$ proves $\SIX$. 
We now have the following theorem where `supremum principle' refers to Principle~\ref{SSP}. 
\begin{thm}[$\ACAo$]\label{franker} The higher items imply the lower ones.  
\begin{itemize}
\item the supremum principle for all $\R^{2}\di \R$-functions.
\item the supremum principle for graph continuous $\R^{2}\di \R$-functions.
\item the supremum principle for graph continuous $[0,1]^{2}\di [0,1]$-functions.
\item The principle $\BOOT_{2}$.
\end{itemize}
We may replace `graph continuous' by any of the items in Def.~\ref{WC}.
\end{thm}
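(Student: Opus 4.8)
The plan is to follow the template already established for Theorem~\ref{frank}, but with the extra variable $x\in \R$ threaded through the construction. The first two implications are immediate: the supremum principle for all $\R^{2}\di \R$-functions trivially implies the one for graph continuous $\R^{2}\di \R$-functions (a subclass), and restricting to $[0,1]^{2}\di [0,1]$ is again a special case, since one can rescale and truncate — though here one must be slightly careful that rescaling preserves graph continuity, which it does because affine homeomorphisms carry closed graphs to closed graphs. So the only real content is the final implication: the supremum principle for graph continuous $[0,1]^{2}\di [0,1]$-functions implies $\BOOT_{2}$.

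For that implication, I would mimic the definition of $g_{f}$ from \eqref{kruk} and \eqref{heng}, but now code the \emph{inner} universal quantifier $(\forall g\in \N^{\N})$ into the first coordinate and the \emph{witness} quantifier $(\exists f\in \N^{\N})$ into a discrete shift of the second coordinate. Concretely, given $Y^{2}$, use $(\exists^{2})$ (available in $\ACAo$) to pass, via the Kleene normal form theorem, to the statement that $(\exists f)(\forall g)(Y(f,g,n)=0)$ is equivalent to $(\exists f)(\forall m)(Z(\overline{f}m,n)$ is consistent$)$ — more precisely, I would arrange a functional so that membership in the desired set $X$ is captured by whether, for the real $x$ coding $g$ and the rational translate coding $f$ and $n$, a certain arithmetical-in-$\exists^{2}$ predicate holds. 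The function $h_{Y}:[0,1]^{2}\di [0,1]$ is then defined to be $1$ on a suitable product of open sub-squares (one factor indexed by $n$ and by the prospective witness $f$, the other ranging over codes of $g$) exactly when $Y(f,g,n)\ne 0$, and $0$ elsewhere. As before, $h_{Y}$ vanishes on a dense set (e.g.\ all rational points), so the `zero everywhere' function witnesses graph continuity of $h_{Y}$, and the same bookkeeping shows $h_{Y}$ satisfies every other weak continuity notion from Def.~\ref{KY} and Def.~\ref{WC}. The point of using \textsf{(sup2)}, i.e.\ the supremum \emph{as a function} $\lambda x.\sup_{y}h_{Y}(x,y)$, is that evaluating this function at the single point $x$ coding a given $g$ tells us $\sup_{y}h_{Y}(x,y)$, and whether this is $0$ for \emph{all} such $x$ is then a $\Pi^{1}_{1}$-style statement that $(\exists^{2})$ plus the supremum function can reduce to arithmetic — this is precisely what lets us collapse the $(\exists f)(\forall g)$ block and extract the set $X$ required for $\BOOT_{2}$.

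The main obstacle, as I see it, is the correct coding of the \emph{nested} quantifier alternation into the two real coordinates: in Theorem~\ref{frank} only a single existential real quantifier had to be absorbed into one coordinate of $[0,1]$, whereas here the second-order $\Pi^{1}_{2}$-form behind $\BOOT_{2}$ means the supremum in the second coordinate must encode a universal quantifier over $\N^{\N}$ while the function-variable $x$ in the first coordinate encodes the existential witness. One must ensure that (i) for a fixed $x$ coding a genuine witness $f$, the set $\{y : h_{Y}(x,y)=1\}$ is empty iff $(\forall g)(Y(f,g,n)=0)$ holds, so that $\sup_{y}h_{Y}(x,y)=0$ in that and only that case, and (ii) the whole graph of $h_{Y}$ still sits inside the closure of its restriction to rationals, so graph continuity survives. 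Both are arranged by the usual device of making $h_{Y}$ supported on open squares that avoid every rational point; the verification that $h_{Y}$ lies in each of the classes of Def.~\ref{WC} is then the same tedious case distinction on whether the relevant values $0,1$ lie in a given open set $G$, exactly as in the proof of Theorem~\ref{lonk}. I would also remark that the last implication actually gives more: iterating the construction across $[0,1]^{2}$ with $n$-fold blocks yields the analogue of Theorem~\ref{timo} for $\BOOT_{2}$, hence $\SIXk$ for all $k$, but for the present theorem it suffices to stop at $\BOOT_{2}$ itself.
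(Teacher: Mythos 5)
Your overall architecture matches the paper's: the first two implications are trivial restrictions, and for the last one you build a two-variable indicator function $h_{Y}(x,y)$ vanishing on all rational points (so the zero function witnesses graph continuity), with $x$ coding the existential witness $f$ and the supremum over $y$ absorbing the inner universal quantifier over $g$. (Your opening sentence assigns the two coordinates the wrong way around, but item (i) and your ``main obstacle'' paragraph state the correct assignment, so I read the earlier sentence as a slip.) There is, however, a genuine gap at the very last step. After applying \textsf{(sup2)} you are left with a statement of the shape
\[
(\exists x\in [0,1])\big[\textstyle 0=\sup_{y}h_{Y}(x,y)\ \wedge\ (\forall z\in [0,1]\cap \Q)(\dots)\big],
\]
whose matrix is arithmetical in $x$ relative to the supremum function, but which still carries one genuine quantifier over the reals. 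You claim this ``$\Pi^{1}_{1}$-style statement'' can be ``reduced to arithmetic'' by $(\exists^{2})$ plus the supremum function. That is false: $(\exists^{2})$ only eliminates number quantifiers, and deciding $(\exists x\in [0,1])(F(x,n)=0)$ uniformly in $n$ for a third-order $F$ is precisely the content of $\BOOT$ itself (in the form \eqref{alti2}); this is strictly stronger than $(\exists^{2})$, since $\ACAo$ does not prove $\FIVE$ while $\ACAo+\BOOT$ does (Theorem \ref{timo}).

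The missing ingredient is that the hypothesis already yields $\BOOT$: the two-dimensional supremum principle specialises (e.g.\ via $f(x,y):=h(y)$, which preserves graph continuity) to Darboux's supremum principle for graph continuous functions on $[0,1]$, which is equivalent to $\BOOT$ by Theorem \ref{frank}; one then applies $\BOOT$ a second time to the displayed outer existential to extract the set $X$ required for $\BOOT_{2}$. This two-stage use of the projection principle is exactly how the paper's proof closes the argument. A second, minor, omission: since $h_{Y}$ must vanish at rational $y$ to be graph continuous, $\sup_{y}h_{Y}(x,y)=0$ only certifies the universal quantifier over irrational codes, so the equivalence needs an extra arithmetical conjunct checking the rational points, as in \eqref{heng2}.
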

\begin{proof}
By Theorem \ref{frank}, we may freely use $\BOOT$.
As in the proof of Theorem~\ref{frank}, $\BOOT_{2}$ is equivalent to the statement that for any $f:(\R^{2}\times\N)\di \N$, there is $X\subset \N$ such that for all $n\in \N$:
\be\label{alti22}
(\exists x\in [0,1])(\forall y\in [0,1])(f(x,y,n)\ne 0)\asa n\in X.
\ee
Now let $f:(\R^{2}\times \N)\di \N$ be given and
 define $g_{f}:\R^{2}\di \R$ as follows:
\[
g_{f}(x, y):=
\begin{cases}
1 & y\in (n+1,n+2 )\setminus \Q\wedge (\exists q\in \Q)( y+q\in [0,1]\wedge f(x, y+q,n)=0   )\\
0   &  \textup{otherwise}
\end{cases}.
\]
Note that $g_{f}(q, r)=0$ for $q, r\in\Q$, i.e.\ $[(\R\times \R)\times \{0\}]\subset \overline{G(g_{f})} $.
Hence, the `zero-everywhere' function $g_{0}$ on $[0,1]^{2}$ is such that $G(g_{0})\subset \overline{G(g_{f})}$, i.e.\ $g_{f}$ is graph continuous. 
By definition, we have for all $n \in \N $:
\begin{align}\label{heng2}\textstyle
(\exists x\in [0,1])(\forall y\in [0,1])&(f(x,y, n)\ne0)\notag\\ 
&\asa \\
\textstyle(\exists x\in [0,1])\big[[0= \sup_{y\in [n+1,n+2]}g_{f}(x, y)] &\wedge (\forall z\in [0,1]\cap \Q)(f(x, z, n)\ne0) \big]. \notag
\end{align}
The bottom formula in \eqref{heng2} is amenable to $\BOOT$ as the formula in large square brackets is arithmetical.  
Thus, $\BOOT_{2}$ follows from the supremum principle for graph continuous functions as in Principle \ref{SSP}.
As in the proof of Theorem \ref{lonk}, one verifies that $g_{f}$ also satisfies all the other weak continuity notions listed in Definition~\ref{WC}. 
\end{proof}
Let $\BOOT_{k}$ for $k\geq 3$ be the obvious generalisation of $\BOOT_{2}$ to $k-1$ quantifier alternations.  
We note that $\BOOT_{k}\di \SIXk$ over $\RCAo$ while $\ACAo+\BOOT_{k}$ proves $\SIXko$.
\begin{cor}[$\ACAo$]\label{felf}
The higher items imply the lower ones.  
\begin{itemize}
\item The supremum principle for graph continuous $[0,1]^{2}\di [0,1]$-functions.
\item $\BOOT_{k}$ for $k\geq 3$.
\end{itemize}
We may replace `graph continuous' by any of the items in Def.~\ref{WC}.
\end{cor}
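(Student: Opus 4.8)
The plan is to iterate the construction behind Theorem~\ref{franker}: each use of the supremum principle collapses one real quantifier, and $\BOOT_{2}$, which is available by Theorem~\ref{franker}, then absorbs the two that remain. First I would isolate, \emph{mutatis mutandis} from the proof of Theorem~\ref{franker}, the following \textbf{collapse lemma}: over $\ACAo$, the supremum principle for graph continuous $[0,1]^{2}\di [0,1]$-functions implies that for every $(\exists^{2})$-decidable predicate $\theta(x,y,n)$ (where, using $\exists^{2}$ to pack finitely many reals into one, the variable $x$ may itself code a tuple $(x_{1},\dots,x_{m})\in [0,1]^{m}$, and $\theta$ may carry function parameters occurring only applied), there is $h:[0,1]\times \N\di \R$ such that, provably in $\ACAo$,
\[\textstyle
(\forall y\in [0,1])\,\theta(x,y,n)\ \asa\ \big[(\forall z\in \Q\cap [0,1])\,\theta(x,z,n)\big]\wedge\big[h(x,n)=0\big].
\]
Here $h$ is the supremum function of a graph continuous $g$ built from $\theta$ exactly as $g_{f}$ is built from $f$ in the proof of Theorem~\ref{franker} (with the subformula $f(x,y+q,n)=0$ replaced by $\neg\theta(x,y+q,n)$); $g$ is designed, as there, to vanish at every rational second coordinate, hence is graph continuous, and it satisfies every notion of Definition~\ref{WC} just as in Theorem~\ref{franker}, regardless of $\theta$ and of the packing. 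Applying this to $\neg\theta$ and negating yields the analogous collapse of $(\exists y\in [0,1])\,\theta(x,y,n)$. What makes iteration possible is that the right-hand side above is arithmetical in $h$, which occurs only evaluated at the free variables, and hence is again $(\exists^{2})$-decidable.

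Next I would prove, by (metatheoretic) induction on $k\geq 2$, that the supremum principle for graph continuous $[0,1]^{2}\di [0,1]$-functions implies $\BOOT_{k}$; the base case $k=2$ is Theorem~\ref{franker}, and the instances $k\geq 3$ are exactly the corollary. For the induction step, take an instance of $\BOOT_{k+1}$, say $n\in X\asa (\exists f_{1})(\forall f_{2})\cdots(Qf_{k+1})(Y(f_{1},\dots,f_{k+1},n)=0)$; using $\exists^{2}$ to identify $\N^{\N}$ with $[0,1]$, rewrite the right-hand side as $(\exists x_{1})(\forall x_{2})\cdots(Qx_{k+1})\,\theta(x_{1},\dots,x_{k+1},n)$ over $[0,1]$ with $\theta$ being $(\exists^{2})$-decidable. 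Apply the collapse lemma to the innermost block $(Qx_{k+1})$, packing $(x_{1},\dots,x_{k})$ as its parameter and using the $\forall$- or the $\exists$-version according to the parity of $k+1$, to obtain $\theta'(x_{1},\dots,x_{k},n)$ which is arithmetical in a fresh parameter $h$, hence again $(\exists^{2})$-decidable, and such that the right-hand side becomes equivalent to a properly alternating prefix of $k$ quantifier blocks, i.e.\ the prefix of $\BOOT_{k}$, applied to $\theta'$. Re-coding $[0,1]$ back into $\N^{\N}$, this is a legitimate instance of $\BOOT_{k}$ (the parameter $h$ being harmless, since it occurs only evaluated), so the induction hypothesis delivers the required $X\subseteq \N$. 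The replacement of `graph continuous' by any notion of Definition~\ref{WC} is immediate, as the witnessing function at every stage already satisfies all of those notions.

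The main obstacle, I expect, is setting up the collapse lemma with enough uniformity to survive the iteration. One must check that the Theorem~\ref{franker} construction depends only on $\theta$ being $(\exists^{2})$-decidable (it does: that proof never uses the special shape of $f$), that packing several reals into the first argument preserves graph continuity and the other notions of Definition~\ref{WC} (it does, since the function still vanishes on a dense set), and that the accumulated supremum functions never introduce new function quantifiers into the matrix (they do not, as each is only evaluated at free variables). Once this is settled the induction is bookkeeping; as a minor variant, one may stop the collapses one step earlier and close with $\BOOT_{2}$ itself, which is what Theorem~\ref{franker} most directly provides, rather than peeling all the way down to $\BOOT$.
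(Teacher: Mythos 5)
Your proposal is correct and follows essentially the same route as the paper: the paper also collapses the innermost real quantifier by applying the supremum function of a graph continuous $g$ built from the matrix (vanishing at rational second coordinates), packs tuples of reals into a single real via interleaved binary expansions (the maps $O$, $E$, $J$), and thereby reduces a $(k{+}1)$-alternation instance to a $k$-alternation one over an arithmetical matrix, bottoming out at $\BOOT_{2}$ from Theorem~\ref{franker}. The only difference is presentational: you isolate a uniform collapse lemma and run an explicit induction, whereas the paper carries out the case $k=3$ and declares the general case straightforward.
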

\begin{proof}
By Theorem \ref{franker}, we may freely use $\BOOT_{2}$.
We shall derive $\BOOT_{3}$ as in \eqref{alti223} after which the general case is straightforward.
Indeed, as in the proof of Theorem~\ref{frank}, $\BOOT_{3}$ is equivalent to the statement that for any $h:(\R^{3}\times\N)\di \N$, there is $X\subset \N$ such that for all $n\in \N$:
\be\label{alti223}
(\exists x\in [0,1])(\forall y\in [0,1])(\exists z\in [0,1])(h(x,y, z,n)\ne 0)\asa n\in X.
\ee
Now let $h:(\R^{3}\times \N)\di \N$ be given and define $g_{h}:\R^{2}\di \R$ as follows:
\[
g_{h}(x, y):=
\begin{cases}
1 & y\in (n+1,n+2 )\setminus \Q\wedge (\exists q\in \Q)( y+q\in [0,1]\wedge h(O(x),E(x), y+q,n)=0   )\\
0   &  \textup{otherwise}
\end{cases},
\]
where $O: \R\di 2^{\N}$ and $E: \R\di 2^{\N}$ are such that if $\alpha$ is the binary\footnote{Choose the expansion with a tail of zeros in case there are multiple.\label{labbe}} expansion of $x$, then $\alpha(2n)=O(x)(n)$ and $\alpha(2n+1)=E(x)(n)$ for any $n\in \N$.  As in the proof of Theorem \ref{franker}, $g_{h}$ is graph continuous, i.e.\ we have access to $\lambda x.\sup_{y\in [p, q]}g_{h}(x, y)$.  Now consider the following equivalence
\begin{align}\label{heng3}\textstyle
(\exists x\in [0,1])(\forall y\in [0,1])&(\exists z\in [0,1])(h(x,y,z, n)\ne0)\notag\\ 
&\asa \\
\textstyle(\exists x\in [0,1])(\forall y\in [0,1])&\left[\begin{array}{c}1= \sup_{z\in [n+1,n+2]}g_{h}(J(x, y), z)] \\ \vee  \\(\exists z\in [0,1]\cap \Q)(h(x, y,z, n)\ne0) \big]\end{array}\right] \notag
\end{align}
where $J(x, y)$ is defined as $\sum_{n=0}^{\infty} \frac{\alpha(2n)}{2^{2n}}+\frac{\beta(2n+1)}{2^{2n+1}} $ for $\alpha, \beta\in 2^{\N}$ the binary$^{\ref{labbe}}$ representations of $x, y$.  
The bottom formula in \eqref{heng3} is amenable to $\BOOT_{2}$ as the formula in large square brackets is arithmetical.  
Thus, $\BOOT_{3}$ follows from the supremum principle for graph continuous functions as in Principle \ref{SSP}.  
As in the proof of Theorem \ref{lonk}, one verifies that $g_{f}$ also satisfies all the other weak continuity notions listed in Theorem \ref{lonk}. 
\end{proof}
%
%
We finish this section with some conceptual remarks generalising our results.
\begin{rem}[Darboux versus Weierstrass]\rm
Continuous functions on the unit interval have the intermediate value property and attain their maximum value.  
The former is also called the \emph{Darboux property} following \cite{darb} and the associated function class has been studied in some detail (\cites{malin, gibs, ellis, poki, faster}).
One could similarly say that a bounded $[0,1]\di [0,1]$-function has the `Weierstrass property' in case it attains its maximum value, i.e.\ $(\exists x\in [0,1])(\forall y\in [0,1])(f(y) \leq f(x))$.
Principles akin to \textsf{(sup1)-(sup3)} for the Weierstrass property of weakly continuous functions seem to yield $\BOOT$, $(\exists^{3})$, and $\Z_{2}$ in the same way as above.  
\end{rem}
\begin{rem}[Riemann integration]\rm
Fubini's theorem (\cite{taomes}) provides rather weak conditions under which a double integral $\iint_{X\times Y}f(x,y)d(x,y)$ reduces to the composition of single integrals.  
This theorem is true for the Lebesgue integral and (only) holds for the Riemann integral if we additionally assume that the single integrals $\int_{0}^{1} f(x, y)dy$ and $\int_{0}^{1} f(x, y)dx$ exists for all $x, y\in [0,1]$.  Special cases of the double (Riemann) integral have been studied by Euler, Abel, and Riemann.  
Hence, the following principle is rather natural.
\begin{princ}[Riemann integration principle for $\Gamma^{2}$]\label{RIP}
For any $f:\R^{2}\di [0,1]$ in the function class $\Gamma^{2}$, the function $\lambda x. \int_{0}^{1} f(x, y)dy$ exists\footnote{To be absolutely clear, Princ.\ \ref{RIP} states the existence of $\Phi:\R\di (\R\cup \{\uparrow\})$ such that $\Phi(x)=\int_{0}^{1} f(x, y)dy$ if the latter exists, and $\Phi(x)=\uparrow$ otherwise.}.  
\end{princ}
One obtains $\BOOT_{k}$ from this principle in the same way as for Corollary \ref{felf}.
\end{rem}
\begin{ack}\rm 
We thank Anil Nerode for his valuable advice, especially the suggestion of studying nsc for the Riemann integral, and discussions related to Baire classes.
We thank Dave L.\ Renfro for his efforts in providing a most encyclopedic summary of analysis, to be found online.  
Our research was supported by the \emph{Klaus Tschira Boost Fund} via the grant Projekt KT43.
%
We express our gratitude towards the latter institution.    
\end{ack}

\begin{bibdiv}
\begin{biblist}
\bibselect{allkeida}

\end{biblist}
\end{bibdiv}

\bye